\theoremstyle{plain}
\newtheorem{theorem}{Theorem}
\newtheorem{proposition}[theorem]{Proposition}
\newtheorem{lemma}[theorem]{Lemma}
\newtheorem{corollary}[theorem]{Corollary}
\theoremstyle{definition}
\newtheorem{definition}[theorem]{Definition}
\newtheorem{remark}[theorem]{Remark}
\numberwithin{theorem}{section}
\newcommand{\into}{\hookrightarrow}
\renewcommand{\theta}{\vartheta}
\renewcommand{\phi}{\varphi}
\newcommand\op{\mathrm{op}}
\newcommand{\dom}{\mathrm{dom}}
\newcommand{\sB}{\mathcal{B}}
\newcommand{\sS}{\mathcal{S}}
\newcommand{\sT}{\mathcal{T}}
\newcommand{\sF}{\mathcal{F}}
\newcommand{\sV}{\mathcal{V}}
\DeclareMathOperator{\Con}{\mathrm{Con}}
\DeclareMathOperator{\KS}{\mathcal{K}}
\DeclareMathOperator{\im}{\mathrm{im}}
\DeclareMathOperator{\Cl}{\mathrm{Cl}}
\renewcommand{\hat}{\widehat}
\newcommand\twoheaduparrow{%
\mathrel{\mathchoice
  {\raise2pt\hbox{%
  \ooalign{\hss$\uparrow$\hss\cr\lower2pt\hbox{%
  $\uparrow$}}}}
  {\raise2pt\hbox{%
  \ooalign{\hss$\uparrow$\hss\cr\lower2pt\hbox{%
  $\uparrow$}}}}
  {\raise1.5pt\hbox{%
  \ooalign{\hss$\scriptstyle\uparrow$\hss\cr\lower1.5pt\hbox{%
  $\scriptstyle\uparrow$}}}}
  {\raise1.1pt\hbox{%
  \ooalign{\hss$\scriptscriptstyle\uparrow$\hss\cr\lower1.1pt\hbox{%
  $\scriptscriptstyle\uparrow$}}}}
}}
\newcommand{\wayuparrow}[1]{\twoheaduparrow\!\!{#1}}
\newif\ifshowproofs
\begin{document}

\title{Sheaves and duality}

\author{Mai Gehrke\corref{cor}\fnref{ERC-Adv}}
\ead{mgehrke@irif.fr}
\address{IRIF, Universit\'e Paris Diderot -- Paris 7, Case 7014, 
75205 Paris Cedex 13, France}
\cortext[cor]{Corresponding author.}

\author{Samuel J. v. Gool \fnref{ERC-MSC}}
\ead{samvangool@me.com}
\address{Mathematics Department, City College of New York, NY 10031, USA\\
and\\
ILLC, Universiteit van Amsterdam, Postbus 94242, 1090 GE Amsterdam, The Netherlands}

\fntext[ERC-Adv]{The research of this author has received funding from the European Research Council (ERC) under the European Union's Horizon 2020 research and innovation programme under the ERC Advanced grant agreement No.670624.}
\fntext[ERC-MSC]{The research of this author has received funding from the European Research Council (ERC) under the European Union's Horizon 2020 research and innovation programme under the Marie Sklodowska-Curie grant agreement No.655941.}

\date{\today}

\begin{keyword}
soft sheaves, congruence lattice, Stone duality
\end{keyword}


\begin{abstract}
It has long been known in universal algebra that any distributive sublattice of congruences of an algebra which consists entirely of commuting congruences yields a sheaf representation of the algebra. In this paper we provide a generalisation of this fact and prove a converse of the generalisation. To be precise, we exhibit a one-to-one correspondence (up to isomorphism) between soft sheaf representations of universal algebras over stably compact spaces and frame homomorphisms from the dual frames of such spaces into subframes of pairwise commuting congruences of the congruence lattices of the universal algebras. For distributive-lattice-ordered algebras this allows us to dualize such sheaf representations. 
\end{abstract}

\maketitle

\section{Introduction}\label{sec:intro}
Sheaf theory emerged in the 1950's and is still central to cohomology theory. Sheaves, as generalized Stone duality, have also found applications in logic and model theory. 
Since the 1970's, sheaf representation results for universal algebras, and in particular for lattice-ordered algebras, have been studied at various levels of generality, and the existence of a distributive lattice of pairwise commuting congruences has previously been identified as an essential ingredient for a good sheaf representation. 

Our intention in this paper is to 
identify \emph{exactly which sheaf representations} correspond to a distributive lattice of pairwise commuting congruences. Our main contribution is to identify the notion of \emph{softness}, which originated with Godement's treatment of homological algebra \cite{God1958}, as central in this respect. 

Our work here grew out of our work on sheaf representations of MV-algebras with Marra \cite{GGM2014} and as such it is closely related to recent work on sheaf representations for MV-algebras \cite{DP2010,RuYa2008} and $\ell$-groups \cite{Schwartz2013}. Softness has also been considered in the study of Gelfand rings, see \cite{Bko70,Mulvey,BanVer2011}, and more recently \cite{SchTre2010}.

An important feature that is essential for applications is that we allow the base spaces of the sheaves we consider to be non-Hausdorff. On the other hand, a tight relationship between the open and the compact sets of the base spaces is required for our results. A natural class of spaces, whose features are particularly well adapted to our results, are the \emph{stably compact spaces} \cite{Lawson2011,GHK03}. This class of topological spaces, which is closely related to Nachbin's compact ordered spaces, provides a common generalization of compact Hausdorff spaces and spectral spaces. Stably compact topologies naturally come with an associated dual topology; the two topologies are related by being the open up-sets and the open down-sets, respectively, of the topology of a compact ordered space. 

This so-called \emph{co-compact duality} for stably compact spaces plays a prominent role in our main result (Theorem~\ref{thm:main}):
\emph{soft sheaf representations of an algebra over a stably compact base space correspond bijectively to frame homomorphisms from the open set lattice of the co-compact dual of the frame of the base space to a frame of commuting congruences of the algebra}. 
Congruence lattices are not frames in general. By frame homomorphisms into the congruence lattice we mean a map preserving finite meets and arbitrary joins. Since open set lattices are frames it will follow that the image of such a map is a frame in the inherited operations.

Our main application of this result is that soft sheaf representations of a distributive lattice correspond bijectively to continuous decompositions of its Priestley dual space which satisfy an `interpolation' property that we introduce (Theorem~\ref{thm:DLmain}). Applying this result, we also obtain 
a general framework for previously known results on sheaf representations of MV-algebras.

The paper is organized as follows. In Section~\ref{sec:scs} we give the necessary background on stably compact spaces. In Section~\ref{sec:main} we prove our main theorem on soft sheaves as morphisms into the congruence lattices of universal algebras. In Section~\ref{sec:morphisms} we show how direct image sheaves fare under our correspondence. In Section~\ref{sec:applications} we apply our result to distributive-lattice-ordered algebras.

\section{Stably compact spaces}\label{sec:scs}
We identify here the main technical facts about topological and ordered topological spaces that we will need.

A \emph{compact ordered space} is a tuple $(Y,\pi,\leq)$ where $(Y,\pi)$ is a compact Hausdorff space and $\leq$ is a partial order on $Y$ which is a closed subset of the product space $(Y,\pi) \times (Y,\pi)$. Given a compact ordered space $(Y,\pi,\leq)$, we denote by $\pi^{\uparrow}$ the topology on $Y$ consisting of $\pi$-open \emph{up-sets}, i.e., $\pi$-open sets which are moreover upward closed in the partial order $\leq$, and by $\pi^{\downarrow}$ the topology on $Y$ consisting of $\pi$-open down-sets. Given a compact ordered space $(Y,\pi,\leq)$, we write $Y^\uparrow$ for the topological space $(Y,\pi^{\uparrow})$, and $Y^{\downarrow}$ for the space $(Y,\pi^{\downarrow})$. Both $Y^{\uparrow}$ and $Y^{\downarrow}$ are so-called \emph{stably compact spaces}, for which see, e.g., \cite[Sec.~VI-6]{GHK03}, \cite{Lawson2011} and the references therein.\footnote{Note that if $(Y,\pi,\leq)$ is a compact ordered space, then so is $Y^{\op}=(Y,\pi,\geq)$ and thus $Y^{\downarrow}$ is $(Y^{\op})^\uparrow$ so that these constructions give rise to the same class of spaces.} In fact, every stably compact space arises as $Y^\uparrow$ for a unique compact ordered space with the same underlying set; cf., e.g., \cite[Prop.~2.10]{Lawson2011}. The order on this compact ordered space $Y$ is the \emph{specialization order} of $Y^{\uparrow}$, defined by $x \leq y$ iff every open set containing $x$ also contains $y$. The topology of a stably compact space can be characterized as a stably continuous frame with compact top element \cite[Sec.~VI-7]{GHK03}.

A subset of a topological space is said to be \emph{saturated} provided it is an intersection of open sets. In a $T_1$ space, and thus in particular in a Hausdorff space, every subset is saturated. In general, the saturated subsets of a topological space are the up-sets of its specialization order.
The notion of saturated sets is central to toggling between the topological spaces $Y^\uparrow$ and $Y^{\downarrow}$:
\begin{proposition}\label{prop:toggle}
Let $(Y,\pi,\leq)$ be a compact ordered space. For any subset $S \subseteq Y$, the following are equivalent:
\begin{enumerate}
\item $S$ is a closed up-set in $(Y,\pi,\leq)$,
\item $S$ is compact and saturated in $(Y,\pi^{\uparrow})$,
\item $S$ is closed in $(Y,\pi^{\downarrow})$.
\end{enumerate}
In particular, the complements of compact-saturated sets of $\pi^{\uparrow}$ are exactly the open sets of $\pi^{\downarrow}$.
\end{proposition}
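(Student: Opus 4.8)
The plan is to prove the cycle $(1)\Rightarrow(2)\Rightarrow(1)$ alongside the trivial equivalence $(1)\Leftrightarrow(3)$, and then to read the final sentence off from $(2)\Leftrightarrow(3)$ by complementation. The equivalence $(1)\Leftrightarrow(3)$ is immediate from the definitions: a set $S$ is closed in $(Y,\pi^{\downarrow})$ iff $Y\setminus S$ is a $\pi$-open down-set, and $Y\setminus S$ is $\pi$-open iff $S$ is $\pi$-closed, while $Y\setminus S$ is a down-set iff $S$ is an up-set. The implication $(1)\Rightarrow(2)$ is also routine: a $\pi$-closed subset of the compact space $(Y,\pi)$ is $\pi$-compact, hence $\pi^{\uparrow}$-compact because $\pi^{\uparrow}\subseteq\pi$; and a $\leq$-up-set is saturated in $Y^{\uparrow}$ by the characterisation of saturated sets recalled above, the specialisation order of $Y^{\uparrow}$ being $\leq$.

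The substance lies in $(2)\Rightarrow(1)$. Let $S$ be $\pi^{\uparrow}$-compact and saturated, so that $S$ is a $\leq$-up-set; it remains to show $Y\setminus S$ is $\pi$-open (it is automatically a down-set, so this simultaneously yields $(3)$). Fix $y\notin S$; then $s\not\leq y$ for every $s\in S$, since $s\leq y$ and $s\in S$ would force $y\in{\uparrow}s\subseteq S$. Using that $\leq$ is closed in $(Y,\pi)\times(Y,\pi)$ and that compact Hausdorff spaces are regular, I would produce for each $s\in S$ a $\pi$-open up-set $W_s\ni s$ and a $\pi$-open set $V_s\ni y$ with $W_s\cap V_s=\emptyset$: pick basic $\pi$-open $A_s\ni s$ and $B_s\ni y$ with $(A_s\times B_s)\cap{\leq}=\emptyset$, shrink $B_s$ to $V_s\ni y$ with $\overline{V_s}\subseteq B_s$, and set $W_s:=Y\setminus{\downarrow}\overline{V_s}$. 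This $W_s$ is a $\pi$-open up-set because ${\downarrow}K$ is $\pi$-closed for every $\pi$-compact $K$ (it is the image of the compact set $(Y\times K)\cap{\leq}$ under the first projection), and one checks directly that $A_s\subseteq W_s$ and $W_s\cap V_s=\emptyset$; alternatively one may simply invoke Nachbin's separation property for compact ordered spaces. Now $\{W_s\}_{s\in S}$ is a $\pi^{\uparrow}$-open cover of $S$; choosing a finite subcover $W_{s_1},\dots,W_{s_n}$ and putting $V:=V_{s_1}\cap\cdots\cap V_{s_n}$, a $\pi$-open neighbourhood of $y$, one gets $V\cap S=\emptyset$, since any $z\in V\cap S$ would lie in some $W_{s_i}$ and also in $V\subseteq V_{s_i}$. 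Hence $Y\setminus S$ is $\pi$-open and $S$ is a $\pi$-closed up-set.

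Finally, the closing assertion is exactly $(2)\Leftrightarrow(3)$ read through complementation: a subset of $Y$ is compact-saturated in $\pi^{\uparrow}$ iff it is $\pi^{\downarrow}$-closed, so the complements of compact-saturated sets of $\pi^{\uparrow}$ are precisely the open sets of $\pi^{\downarrow}$. I expect the only genuine obstacle to be the step in $(2)\Rightarrow(1)$ where the merely pointwise separation supplied by closedness of $\leq$ has to be upgraded to a cover of $S$ by $\pi$-open up-sets, so that $\pi^{\uparrow}$-compactness of $S$ can be applied; this is the one place where both hypotheses on $S$ are really used, and it is in essence Nachbin's separation theorem.
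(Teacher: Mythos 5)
Your proof is correct. The paper itself does not prove Proposition~\ref{prop:toggle}: it simply cites \cite[Lem.~2.4 \& Thm.~2.12]{Jung2004}, so there is no in-paper argument to compare against; what you have written is a self-contained proof of the cited facts. Your argument is the standard Nachbin-style one, and you handle the one genuinely delicate point correctly: closedness of $\leq$ only gives a box $A_s\times B_s$ missing $\leq$, and to turn $Y\setminus{\downarrow}\overline{V_s}$ into an open \emph{up-set} still containing $A_s$ you must first shrink $B_s$ to $V_s$ with $\overline{V_s}\subseteq B_s$ (so that $(A_s\times\overline{V_s})\cap{\leq}=\emptyset$), and you must know that ${\downarrow}K$ is $\pi$-closed for $\pi$-compact $K$, which you justify via the projection of $(Y\times K)\cap{\leq}$. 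Two minor remarks. First, in $(2)\Rightarrow(1)$ the fact that a saturated set of $\pi^{\uparrow}$ is a $\leq$-up-set needs nothing about specialization orders: every $\pi^{\uparrow}$-open is an up-set by definition, hence so is any intersection of them. Second, in $(1)\Rightarrow(2)$ the step ``every $\leq$-up-set is $\pi^{\uparrow}$-saturated'' silently uses that the specialization order of $Y^{\uparrow}$ is exactly $\leq$, a fact the paper states as background but which is itself equivalent to the Nachbin separation you establish in $(2)\Rightarrow(1)$; your own $W_s$/$V_s$ construction already yields it directly (for $S$ a closed up-set and $y\notin S$, the finite union $\bigcup_i W_{s_i}$ is an open up-set containing $S$ and missing $y$), so the proof could be made fully self-contained at no extra cost.
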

\begin{proof}
See, e.g., \cite[Lem.~2.4 \& Thm.~2.12]{Jung2004}.
\end{proof}
Stably compact spaces can be characterized intrinsically: they are those topological spaces which are $T_0$, compact, locally compact, \emph{coherent} (the intersection of compact-saturated sets is compact) and \emph{sober} (the only  union-irreducible closed sets are closures of points), see, e.g.,~\cite[Subsec.~2.3]{Jung2004}.
The fact that stably compact spaces are in particular sober will allow us to apply the celebrated \emph{Hofmann-Mislove Theorem}, which we will recall now.

Let $\Omega$ be a frame (that is, a complete lattice in which binary meets distribute over arbitrary joins). A filter $\sF \subseteq \Omega$ is called \emph{Scott-open} if, for any directed $(u_i)_{i \in I}$ in $\Omega$ such that $\bigvee_{i \in I} u_i \in \sF$, there exists $i \in I$ such that $u_i \in \sF$. We denote by $\mathrm{Filt}(\Omega)$ the lattice of filters of $\Omega$, ordered by inclusion, and by $\sigma\mathrm{Filt}(\Omega)$ the lattice of Scott-open filters of $\Omega$ ordered by inclusion. 

If $Y$ is a topological space, we denote by $\Omega Y$ the set of opens of $Y$, and by $\KS Y$ the set of compact-saturated subsets of $Y$, and both are partially ordered by inclusion.

\begin{theorem}[Hofmann-Mislove Theorem]\label{thm:hofmis}
Let $Y$ be a sober space. The function $\phi\colon\KS Y \to \mathrm{Filt}(\Omega Y)$ defined for $K \in \KS Y$ by 
\[ K \mapsto \sF_K := \{U \in \Omega Y \ | \ K \subseteq U\} \in \mathrm{Filt}(\Omega Y)\]
is an order-embedding whose image consists precisely of the Scott-open filters. In particular, given a Scott-open filter $\sF$ of $\Omega Y^{\uparrow}$, the intersection, $K_\sF := \bigcap\sF$, is the unique compact-saturated set in $Y^{\uparrow}$ such that $\phi(K_\sF) = \sF$.
\end{theorem}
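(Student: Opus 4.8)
The plan is to establish, in order: (i) that $\phi$ is well defined, i.e.\ $\sF_K$ is a Scott-open filter for every $K\in\KS Y$; (ii) that $\phi$ is injective and reflects the order; and (iii) that the image of $\phi$ is exactly the set of Scott-open filters. Point (iii) --- and within it one particular inclusion --- is where essentially all the work lies, and is the only place the sobriety hypothesis is used; (i) and (ii) are routine bookkeeping.

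For (i) I would note that $\sF_K$ is a filter because $Y\in\sF_K$, because $\sF_K$ is an up-set, and because $K\subseteq U$ and $K\subseteq V$ imply $K\subseteq U\cap V$; and that $\sF_K$ is Scott-open because, given a directed family $(U_i)_{i\in I}$ with $\bigcup_i U_i\supseteq K$, compactness of $K$ yields a finite subfamily covering $K$, and directedness then yields a single $U_i\supseteq K$, i.e.\ $U_i\in\sF_K$. For (ii) the one fact needed is that a saturated set is the intersection of the open sets containing it, so $K=\bigcap\sF_K$ for every $K\in\KS Y$. Then $\sF_K\subseteq\sF_L$ forces $L=\bigcap\sF_L\supseteq\bigcap\sF_K=K$, while conversely $L\subseteq K$ clearly forces $\sF_K\subseteq\sF_L$; hence $\sF_K\subseteq\sF_L$ if and only if $L\subseteq K$, which yields both injectivity and the order-embedding property.

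The heart of the argument is (iii). Given a Scott-open filter $\sF$ of $\Omega Y$, set $K:=\bigcap\sF$; then $K$ is saturated and $\sF\subseteq\sF_K$ trivially, so what must be shown is $\sF_K\subseteq\sF$ (and then compactness of $K$). Suppose for contradiction that $K\subseteq U$ for some $U\in\Omega Y\setminus\sF$. Since $\sF$ is a Scott-open filter, $\Omega Y\setminus\sF$ is a down-set that is closed under directed unions, so Zorn's Lemma provides a $V\in\Omega Y\setminus\sF$ maximal subject to $V\supseteq U$; by maximality $V$ is in fact maximal in all of $\Omega Y\setminus\sF$. Then $Y\setminus V$ is nonempty and irreducible: if $Y\setminus V=C_1\cup C_2$ with both $C_i$ strictly smaller closed sets, then $Y\setminus C_1$ and $Y\setminus C_2$ are open, strictly larger than $V$, hence in $\sF$ by maximality, hence so is $V=(Y\setminus C_1)\cap(Y\setminus C_2)$, a contradiction. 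By sobriety, $Y\setminus V=\overline{\{x\}}$ for some $x\in Y$. For every $W\in\sF$ we have $W\not\subseteq V$ (else $V\in\sF$), equivalently $x\in W$; so $x\in\bigcap\sF=K\subseteq U\subseteq V$, contradicting $x\in Y\setminus V$. Hence $\sF=\sF_K$. Finally $K$ is compact, because any open cover of $K$ produces, through its finite unions, a directed family whose union is in $\sF_K=\sF$, so Scott-openness gives a finite subcover. Thus $\phi$ is a bijection onto the Scott-open filters; the uniqueness of $K_\sF$ for the sober space $Y^\uparrow$ is then just injectivity.

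The step I expect to be the main obstacle is precisely the inclusion $\sF_K\subseteq\sF$ in (iii): it is where sobriety genuinely enters, through the Zorn's-Lemma extraction of a maximal filter-avoiding open set and the identification of its complement as the closure of a point. Everything else reduces to routine manipulations of filters, compactness, and saturated sets.
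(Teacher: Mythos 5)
Your proof is correct and complete. The paper does not actually prove this theorem --- its ``proof'' is a citation of \cite{HM1981} together with a pointer to the shorter argument in \cite{KP1994} --- so there is no in-paper argument to compare against; what you have written is essentially that standard short proof. Your key step is sound: the complement of a Scott-open filter is a down-set closed under directed unions, so Zorn's Lemma yields an open $V\supseteq U$ maximal outside $\sF$ (and hence maximal in all of $\Omega Y\setminus\sF$); its complement is a nonempty irreducible closed set, sobriety supplies a generic point $x$, and the observation that an open $W$ meets $\overline{\{x\}}$ iff $x\in W$ puts $x$ into $\bigcap\sF=K\subseteq U\subseteq V$, the desired contradiction. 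Compactness of $K$ then follows from Scott-openness applied to the directed family of finite unions of a cover, and uniqueness is injectivity. Two cosmetic remarks. First, in part (ii) you write that $\sF_K\subseteq\sF_L$ forces $L=\bigcap\sF_L\supseteq\bigcap\sF_K=K$; intersecting over the \emph{larger} family gives the \emph{smaller} set, so this should read $L\subseteq K$ --- which is exactly what your concluding ``if and only if'' correctly states, so this is a typographical slip rather than a gap. Second, with both $\KS Y$ and $\mathrm{Filt}(\Omega Y)$ ordered by inclusion, the equivalence $\sF_K\subseteq\sF_L\iff L\subseteq K$ that you establish makes $\phi$ an order-\emph{reversing} embedding; that is a quirk of the theorem's phrasing (one of the two posets is usually taken with reverse inclusion), not a defect of your argument.
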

\begin{proof}
This is \cite[Theorem 2.16]{HM1981}. Also see \cite{KP1994} for a shorter proof.
\end{proof}

Let $Y$ be a locally compact space. Recall, see e.g. \cite[Prop.~3.3]{Lawson2011}, that, for $U, U' \in \Omega Y$, we have that $U$ is \emph{way below} $U'$, denoted $U \prec U'$, if, and only if, there exists $K \in \KS Y$ such that $U \subseteq K \subseteq U'$. For $K, K' \in \KS Y$, we will also write $K \prec K'$ if there exists $U \in \Omega Y$ such that $K \subseteq U \subseteq K'$.\footnote{Note that this notation is consistent with the use of the same symbol `$\prec$' for the way below relation between opens: if either $S$ or $S'$ is compact and open, and the other is compact or open, then $S \prec S'$ if, and only if, $S \subseteq S'$, for either interpretation of the symbol `$\prec$'.}

We recall three topological facts that we need in the proof of our main theorem in the next section. We include the short proofs for the sake of completeness. The first of these facts is a property of locally compact spaces that has been called \emph{Wilker's condition} in the literature \cite{KeiLaw05}. 

\begin{lemma} \label{lem:refinecover}
Let $Y$ be a locally compact space. If $K$ is compact-saturated and $(V_i)_{i = 1}^n$ is a finite open cover of $K$, then there exists a finite open cover $(U_i)_{i=1}^n$ of $K$ such that $U_i \prec V_i$ for each $i = 1, \dots, n$.
\end{lemma}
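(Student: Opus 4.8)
The statement asserts a form of shrinking lemma for compact-saturated sets in locally compact spaces, with the shrinking being strict in the "way below" sense. I would prove it by induction on $n$, reducing everything to the case $n = 1$, and then handle $n = 1$ by a direct local-compactness argument on the compact-saturated set $K$.

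\emph{Base case $n = 1$.} Here $K \subseteq V_1$ with $V_1$ open. Since $Y$ is locally compact and $K$ is compact-saturated, I would first cover $K$: for each $x \in K$, local compactness gives an open $W_x$ and a compact-saturated $Q_x$ with $x \in W_x \subseteq Q_x \subseteq V_1$ (equivalently $W_x \prec V_1$). By compactness of $K$, finitely many $W_{x_1}, \dots, W_{x_m}$ cover $K$; put $U_1 := \bigcup_{j=1}^m W_{x_j}$. Then $K \subseteq U_1$, and $U_1 \subseteq \bigcup_j Q_{x_j} =: Q$, which is compact-saturated (a finite union of compact-saturated sets is compact-saturated — this uses that $Y$ is $T_0$/the sets are saturated, but finite unions of compact sets are compact in any space and finite unions of up-sets are up-sets), and $Q \subseteq V_1$. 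Hence $U_1 \subseteq Q \subseteq V_1$, i.e. $U_1 \prec V_1$, as required.

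\emph{Inductive step.} Suppose the lemma holds for $n-1$ and let $(V_i)_{i=1}^n$ cover $K$. The set $C := K \setminus V_n$ is closed in $K$, hence compact, and is saturated as well (it is $K$ minus an open set, so still an up-set, being the intersection of $K$ with the complement of $V_n$ — here I should be slightly careful: $C$ need not be saturated in general, but it is compact, and I can replace it by its saturation $\uparr C$, which is compact-saturated; alternatively, since coherence is not assumed here one argues directly with the compact set $C$ and notes the $n=1$ argument above only needed $C$ compact, not saturated). The sets $V_1, \dots, V_{n-1}$ cover $C$ (since the $V_i$ cover $K$ and $C$ misses $V_n$), so by the inductive hypothesis applied to $C$ there are opens $U_1, \dots, U_{n-1}$ covering $C$ with $U_i \prec V_i$. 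Now $U_1, \dots, U_{n-1}, V_n$ is a finite open cover of $K$; applying the $n=1$ argument (the shrinking lemma in the form just proved, applied coordinatewise, or rather: $K \setminus (U_1 \cup \dots \cup U_{n-1})$ is a compact subset of $V_n$, so shrink $V_n$ to some $U_n \prec V_n$ still covering it) yields the last set $U_n \prec V_n$ with $U_1 \cup \dots \cup U_{n-1} \cup U_n \supseteq K$. This completes the induction.

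\emph{Main obstacle.} The delicate point is the handling of saturation in the inductive step: $K \setminus V_n$ is compact but need not be an up-set, so it is not literally "compact-saturated", and the lemma as stated quantifies over compact-saturated $K$. The clean fix is to observe that the core shrinking construction in the $n=1$ case only uses \emph{compactness} of the set being covered (local compactness of $Y$ provides, around each point, an open set way below a given open neighbourhood, and one then extracts a finite subcover), so one can state and prove an auxiliary version for merely compact $K$ and feed compact sets like $K \setminus V_n$ into the recursion; the saturated hypothesis in the final statement is then only needed to phrase the output in terms of $\KS Y$. I would make sure the write-up isolates this compact-only version first. The other routine check is that finite unions of sets of the form $U \prec V$ behave well, i.e. $U \prec V$ and $U' \prec V'$ imply $U \cup U' \prec V \cup V'$, which follows immediately from the interpolating compact-saturated sets and coherence is not even needed since we only take finite unions.
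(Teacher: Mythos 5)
Your argument is correct, but it takes a genuinely different route from the paper's. The paper proves the lemma in one direct step, with no induction: for each $y \in K$ it chooses an index $i(y)$ with $y \in V_{i(y)}$, uses local compactness to pick $U_y$ with $y \in U_y \prec V_{i(y)}$, extracts a finite subcover indexed by $S \subseteq K$, and sets $U_i := \bigcup\{U_y \mid y \in S,\ i(y) = i\}$; the only auxiliary fact needed is the one you note at the end, that a finite union of opens each way below $V_i$ is again way below $V_i$. This sidesteps entirely the saturation issue that dominates your write-up: because the paper never forms the set $K \setminus V_n$, it never has to worry about whether a compact remainder is saturated. Your induction is sound once you isolate the ``compact-only'' version of the base case (and indeed that version is all that is ever used, since local compactness is a pointwise condition and compactness is all that is needed to extract the finite subcover); alternatively you could have fed $\uparr(K \setminus V_n)$ into the recursion, since the saturation of a compact set is compact and is contained in every open set containing it. What your approach buys is a reusable single-set shrinking statement ($K$ compact, $K \subseteq V$ open, then $K \subseteq U \prec V$ for some open $U$), which the paper also uses later in the $n=1$ form; what the paper's approach buys is brevity and the absence of any bookkeeping about saturation. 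Both are valid proofs of the stated lemma.
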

\begin{proof}
For each $y \in K$, pick some $i(y)$ such that $y \in V_{i(y)}$, and by local compactness of $Y$ pick an open $U_y$ such that $y \in U_y \prec V_{i(y)}$. Then $(U_y)_{y \in K}$ is an open cover of $K$, so pick $S \subseteq K$ finite such that $(U_y)_{y \in S}$ covers $K$. For each $i = 1,\dots,n$, define $U_i := \bigcup \{ U_y \ | \ y \in S, i(y) = i\}$. Then $(U_i)_{i = 1}^n$ is an open cover of $K$ and $U_i \prec V_i$ for each $i$.
\end{proof}

The second property we will need is specific to stably compact spaces, and is called \emph{weakly Hausdorff} in the literature \cite{KeiLaw05}. 

\begin{lemma}\label{lem:dualrefinecover}
Let $(Y,\leq,\tau)$ be a compact ordered space. Let $K_1, \dots, K_n \in \KS Y^{\uparrow}$ and $U \in \Omega Y^{\uparrow}$ such that $\bigcap_{i=1}^n K_i \subseteq U$. There exist $L_1, \dots, L_n \in \KS Y^{\uparrow}$ such that $K_i \prec L_i$ and $\bigcap_{i=1}^n L_i \subseteq U$.
\end{lemma}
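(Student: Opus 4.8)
The plan is to dualise the whole statement to the co-compact space $Y^{\downarrow}$, where it becomes an instance of Wilker's condition (Lemma~\ref{lem:refinecover}), and then dualise back. The only tool needed for the translation is Proposition~\ref{prop:toggle}, applied both to $(Y,\pi,\leq)$ and to $Y^{\op}$: a subset of $Y$ is compact-saturated in $Y^{\uparrow}$ iff it is closed in $Y^{\downarrow}$; the open sets of $Y^{\downarrow}$ are exactly the complements of the compact-saturated sets of $Y^{\uparrow}$; and, symmetrically, the open sets of $Y^{\uparrow}$ are exactly the complements of the compact-saturated sets of $Y^{\downarrow}$.

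First I would rephrase the hypothesis. Each $K_i \in \KS Y^{\uparrow}$ has complement $K_i^c$ open in $Y^{\downarrow}$; the open set $U$ of $Y^{\uparrow}$ has complement $C := U^c$ which is compact-saturated in $Y^{\downarrow}$; and $\bigcap_{i=1}^n K_i \subseteq U$ is equivalent to $C \subseteq \bigcup_{i=1}^n K_i^c$. Thus $(K_i^c)_{i=1}^n$ is a finite open cover of the compact-saturated set $C$ in the space $Y^{\downarrow}$.

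Now $Y^{\downarrow}$ is stably compact, hence locally compact, so Lemma~\ref{lem:refinecover} applies in $Y^{\downarrow}$ and yields a finite open cover $(W_i)_{i=1}^n$ of $C$ in $Y^{\downarrow}$ with $W_i \prec K_i^c$ in $Y^{\downarrow}$ for each $i$. Unfolding the way-below relation for opens, for each $i$ there is $N_i \in \KS Y^{\downarrow}$ with $W_i \subseteq N_i \subseteq K_i^c$. I then set $L_i := W_i^c$. Complementing the displayed inclusion gives $K_i \subseteq N_i^c \subseteq L_i$. Here $L_i = W_i^c$ is compact-saturated in $Y^{\uparrow}$ (a complement of an open set of $Y^{\downarrow}$), and $N_i^c$ is open in $Y^{\uparrow}$ (a complement of a compact-saturated set of $Y^{\downarrow}$), so the chain $K_i \subseteq N_i^c \subseteq L_i$ witnesses $K_i \prec L_i$ in $Y^{\uparrow}$. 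Finally, from $C \subseteq \bigcup_i W_i$ we obtain $\bigcap_i L_i = \bigcap_i W_i^c \subseteq C^c = U$, the last required inclusion.

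I do not expect any genuine obstacle here: the argument is entirely a matter of applying Wilker's condition on the co-compact side and keeping the complementation bookkeeping straight. The two points that need a moment's care are invoking Proposition~\ref{prop:toggle} on the correct side (for $Y^{\op}$ as well as for $Y$) so that the complemented sets land in the intended lattices, and checking that $Y^{\downarrow}$ satisfies the local-compactness hypothesis of Lemma~\ref{lem:refinecover}, which it does because every stably compact space is locally compact.
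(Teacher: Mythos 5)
Your argument is correct and is essentially the paper's own proof: both complement everything via Proposition~\ref{prop:toggle}, apply Wilker's condition (Lemma~\ref{lem:refinecover}) to the cover $(Y\setminus K_i)_i$ of the $Y^{\downarrow}$-compact-saturated set $Y\setminus U$ in $Y^{\downarrow}$, and take $L_i$ to be the complement of the refined open cover. Your bookkeeping (including the symmetric use of the toggle for $Y^{\op}$ to see that $N_i^c$ is $Y^{\uparrow}$-open) is accurate.
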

\begin{proof}
By Proposition~\ref{prop:toggle}, $(Y \setminus K_i)_{i=1}^n$ is an open cover in $Y^\downarrow$ of the set $Y \setminus U$ which is compact-saturated in $Y^\downarrow$. Apply Lemma~\ref{lem:refinecover} to $Y^\downarrow$ to obtain a finite $Y^{\downarrow}$-open cover $(V_i)_{i=1}^n$ of $Y \setminus U$ such that $V_i \prec Y \setminus K_i$. For each $i$, pick $M_i \in \KS Y^{\downarrow}$ such that $V_i \subseteq M_i \subseteq Y \setminus K_i$. Defining $L_i := Y \setminus V_i$ now gives the result.
\end{proof}

Finally, the third property we need is closely related to the frame-theoretic characterization of stably compact spaces as stably continuous frames with compact top element \cite[Section~VI-7]{GHK03}.

\begin{lemma}\label{lem:compsatasintersection}
Let $(Y,\pi,\leq)$ be a compact ordered space. For any compact-saturated set $K$ in $Y^{\uparrow}$, the collection $\wayuparrow{K} := \{ K' \in \KS Y^{\uparrow} \ | \ K \prec K'\}$ is filtered, and $K = \bigcap \wayuparrow{K}$.
\end{lemma}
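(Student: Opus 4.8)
The plan is to prove the two assertions in turn, starting with filteredness, which is immediate, and then deriving the equality $K=\bigcap\wayuparrow{K}$ from local compactness of $Y^{\uparrow}$.

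First I would check that $\wayuparrow{K}$ is nonempty and downward directed under inclusion. Nonemptiness is clear: $Y$ is open and, being the top element of $\Omega Y^{\uparrow}$, compact, so $K\subseteq Y\subseteq Y$ witnesses $K\prec Y$ and hence $Y\in\wayuparrow{K}$. For downward directedness, suppose $K_1,K_2\in\wayuparrow{K}$, with witnessing opens $U_1,U_2\in\Omega Y^{\uparrow}$ satisfying $K\subseteq U_j\subseteq K_j$ for $j=1,2$. Then $K\subseteq U_1\cap U_2\subseteq K_1\cap K_2$, and $K_1\cap K_2$ is again compact-saturated because $Y^{\uparrow}$ is coherent; since $U_1\cap U_2$ is open, this shows $K_1\cap K_2\in\wayuparrow{K}$, and it lies below both $K_1$ and $K_2$.

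For the equality, the inclusion $K\subseteq\bigcap\wayuparrow{K}$ holds trivially, since $K\prec K'$ entails $K\subseteq K'$. For the reverse inclusion I would fix $x\notin K$ and produce some $K'\in\wayuparrow{K}$ avoiding $x$. As $K$ is saturated, it is the intersection of the open sets containing it, so there is $W\in\Omega Y^{\uparrow}$ with $K\subseteq W$ and $x\notin W$. Now invoke local compactness of $Y^{\uparrow}$: for each $y\in K$ choose $U_y\in\Omega Y^{\uparrow}$ and $K_y\in\KS Y^{\uparrow}$ with $y\in U_y\subseteq K_y\subseteq W$; by compactness of $K$, finitely many $U_{y_1},\dots,U_{y_m}$ cover $K$. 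Put $U:=\bigcup_{i=1}^m U_{y_i}$ and $K':=\bigcup_{i=1}^m K_{y_i}$. Then $K'$ is compact, being a finite union of compact sets, and saturated, being a finite union of up-sets of the specialization order; so $K'\in\KS Y^{\uparrow}$, and $K\subseteq U\subseteq K'\subseteq W$. The first two inclusions give $K\prec K'$, whence $K'\in\wayuparrow{K}$; the last, together with $x\notin W$, gives $x\notin K'$. Thus $x\notin\bigcap\wayuparrow{K}$, and the equality follows.

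The argument is short and uses only facts already recorded for $Y^{\uparrow}$ as a stably compact space — compactness, coherence, and local compactness — so Proposition~\ref{prop:toggle} is not needed here. The one place calling for minor care is the verification that the interpolant $K'=\bigcup_i K_{y_i}$ is still saturated; this is where it is handy to use that the saturated sets are exactly the up-sets of the specialization order, rather than working directly with intersections of opens.
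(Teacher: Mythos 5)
Your proof is correct and follows essentially the same route as the paper: filteredness via coherence of $Y^{\uparrow}$, and the reverse inclusion by using saturation of $K$ to find an open $W$ missing $x$ and then interpolating a compact-saturated set between $K$ and $W$. The only cosmetic difference is that where the paper cites Lemma~\ref{lem:refinecover} with $n=1$ for the interpolation step, you inline that lemma's proof directly from local compactness (correctly noting that the finite union of compact-saturated sets is compact-saturated).
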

\begin{proof}
If $K_1, K_2 \in \KS Y^{\uparrow}$ are such that $K \prec K_i$, pick $U_1, U_2 \in \Omega Y^{\uparrow}$ such that $K \subseteq U_i \subseteq K_i$. Then, as $Y^{\uparrow}$ is coherent, $K_1 \cap K_2\in\KS Y^{\uparrow}$ and $U_1 \cap U_2$ witnesses that $K \prec K_1 \cap K_2$, so $\wayuparrow{K}$ is filtered. Clearly $K \subseteq \bigcap \wayuparrow{K}$. For the reverse inclusion, suppose that $y \not\in K$. As $K$ is saturated, it is an intersection of open sets, so there is $V\in\Omega Y^{\uparrow}$ with $K\subseteq V$ and $y\not\in V$. By Lemma~\ref{lem:refinecover} with $n=1$, there is $U\in\Omega Y^{\uparrow}$ with $K\subseteq U\prec V$. It follows that there is $K'\in\KS Y^{\uparrow}$ with $U\subseteq K'\subseteq V$. Thus $K' {\in} \wayuparrow{K}$ and $y\not\in K$ so that $K = \bigcap \wayuparrow{K}$.
\end{proof}

\section{Sheaves and congruences}\label{sec:main}
We are interested in sheaf representations of algebras, and for the work on sheaves we need an ambient category, in which we will assume that products and subobjects are given by Cartesian products and (isomorphic copies of) subalgebras. We will also need colimits, and thus it is natural to assume we are working in a category $\sV$ which is a variety of algebras of some 
finitary 
signature with their algebra homomorphisms.

A \emph{presheaf of $\sV$-algebras} over a topological space $Y$ is a functor $F : (\Omega Y)^\op \to \sV$. Given a collection $(U_i)_{i \in I}$ of opens of $Y$ and a  collection  $(s_i)_{i \in I}$ with $s_i \in FU_i$ for each $i \in I$, we say that the $s_i$ are \emph{patching} provided for any $i,j\in I$ we have\footnote{As usual in sheaf theory, if $U' \subseteq U$ are open sets in $Y$, we use the notation $s|_{U'}$ for the image of an element $s \in FU$ under the map obtained by applying $F$ to the inclusion $U' \subseteq U$.} 
\[
s_i|_{U_i\cap U_j}=s_j|_{U_i\cap U_j}.%
\]
A presheaf is a \emph{sheaf} provided it satisfies the \emph{patch property}: any patching family extends uniquely to the union of their domains. That is, for any collection of opens $(U_i)_{i \in I}$ of $Y$ and $(s_i)_{i \in I}$ with $s_i \in FU_i$ for each $i \in I$, so that the $s_i$ are patching, there exists a unique $s \in F(\bigcup_{i \in I} U_i)$ such that $s|_{U_i} = s_i$ for all $i \in I$.%

A closely related notion is that of a bundle of $\sV$-algebras. A \emph{bundle of $\sV$-algebras} over a space $Y$ is a continuous map $p \colon E \to Y$ together with, for each $y \in Y$ and each $n$-ary operation symbol $f$ of $\sV$-algebras, an operation $f^{E_y} : (E_y)^n \to E_y$, where $E_y := p^{-1}(y)$, in such a way that $(E_y, (f^{E_y}))$ is a $\sV$-algebra, and such that the partial operation $f^E$ from $E^n$ to $E$, defined as the union of the functions $f^{E_y}$, is continuous. For each $y \in Y$, the topological $\sV$-algebra $E_y$ is called the \emph{stalk} at of the bundle at $y$. Given  an open $U\subseteq Y$, a continuous function $s : U \to E$ such that $p s = \mathrm{id}_U$ is called a \emph{local section} of $p$ over $U$. A \emph{global section} is a local section whose domain is $Y$.

Given a bundle $E \to Y$ of $\sV$-algebras, assign to every open set $U$ of the base space $Y$ the $\sV$-algebra $FU$ of local sections over $U$, the subalgebra of the direct product $\prod_{y \in U} E_y$ consisting of the continuous functions. In the case $U = Y$, the algebra $FY$ is called the \emph{algebra of global sections} of $F$. This assignment on objects extends to a sheaf of $\sV$-algebras by letting $F(U \subseteq V)$ send a local section over $V$ to its restriction over $U$.  There is a reverse process which assigns to every presheaf $F$ of $\sV$-algebras a bundle $p \colon E \to Y$ of $\sV$-algebras. This bundle will always be a so-called \emph{\'etale space}, that is, a bundle $p$ for which every $e \in E$ has an open neighborhood $V$ such that $pV$ is open and $p|_V \colon V \to pV$ is a homeomorphism, see \cite[Chapter\,II.5]{MM} for the construction of the \'etale space associated with a sheaf. These two assignments extend to an adjunction between bundles and presheaves, which restricts to an equivalence of categories between sheaves of $\sV$-algebras and \'etale spaces of $\sV$-algebras \cite[Thm.~II.6.2]{MM}. In the sequel it will be useful to know that, in an \'etale space, there is a local section through any point $e \in E$, and both the function $p$ and any local section $s$ of $p$ are open mappings \cite[Prop.~II.6.1]{MM}.

\begin{definition}
A \emph{sheaf representation} of a $\mathcal{V}$-algebra $A$ is a sheaf $F$ such that $A$ is isomorphic to $FY$, the algebra of global sections of $F$.
\end{definition}

In this case, $A$ embeds into the direct product $\prod_{y \in Y} E_y$, where $E_y$ is the stalk at $y \in Y$ of the \'etal\'e space corresponding to $F$.

For $U\subseteq Y$ open and $s,t\in FU$, we write $\|s = t\|$ for the \emph{equalizer} of $s$ and $t$, which is defined as the set of those $y \in U$ so that there exists an open $V$ with $y\in V\subseteq U$ and $s|_V=t|_V$. It is clear from the definition that equalizers are always open. It is also not hard to see that in the setting of the \'etale space $p \colon E \to Y$ corresponding to $F$, the equalizer of two local sections consists of the set of points $y\in U$ at which they take the same value.

In the \'etale space formulation of sheaves, one can consider continuous sections over subsets of $Y$ which are not necessarily open. 
Given a sheaf representation $F$ of an algebra $A$ with corresponding \'etale space $p\colon E\to Y$, we define for each subset $S\subseteq Y$ the subalgebra 
\[
\Gamma S := \{s\colon S\to E\mid s \text{ is a continuous section of  } p\}
\]
 of the direct product $\prod_{y \in S} E_y$. For each open $U\subseteq Y$ we have $\Gamma U\cong FU$. 
Also, for $S\subseteq T\subseteq Y$, we denote the restriction morphism $\Gamma T \to\Gamma S $ by $h_{TS}$.

We now introduce the notion of `softness' \cite[Sec.~II.3.4]{God1958} which is appropriate in our context.
\begin{definition}\label{def:soft}
Let $F$ be a sheaf of $\sV$-algebras over a space $Y$ and let $p \colon E \to Y$ be the corresponding \'etale space. Then $F$ is called \emph{soft} if, for every compact saturated $K \subseteq Y$ and continuous section $s \colon K \to E$ of $p$, there exists a global section $t$ of $p$ such that $t|_K = s$.
\end{definition}
\begin{remark} \ \\
1. In the special case where the base space is assumed to be locally compact and Hausdorff, Definition~\ref{def:soft} remains the same if `compact saturated' is replaced by `closed', cf.~\cite[Prop.~2.5.6]{KS1994}. However, our underlying space may fail to be Hausdorff, and the definition with compact sets, rather than closed sets, turns out to be the appropriate one for our purposes. What we call `soft' here is sometimes called `c-soft', but since we never use the competing notion in this paper, no confusion will arise.

2. In Definition~\ref{def:soft} we define `soft' using the \'etale space of a sheaf. An equivalent definition which directly uses the functor is: $F$ is \emph{soft} if, and only if, for every Scott-open filter $\sF$ in $\Omega Y$, $U \in \sF$ and $s \in FU$, there exists $t \in FY$ such that, for some $V \in \sF$ with $V \subseteq U$, $s|_{V} = t|_{V}$. We leave it as an exercise for the interested reader to prove that this definition is indeed equivalent.
\end{remark}

Before we can get to our main results, we need the following lemma, which shows how to recover the value of a sheaf on open sets given its value on compact-saturated sets.
\begin{lemma}\label{lem:sheafiso}
Let $Y$ be a locally compact space, and $F$ a sheaf of $\sV$-algebras over $Y$. For each open $U$ in $Y$, $FU$ is the inverse limit of the filtering diagram of maps $h_{KL}\colon\Gamma K \to \Gamma L$, where $K, L \in \KS Y, L \subseteq K \subseteq U$. For any $U \subseteq V$ open in $Y$, the restriction map $FV \to FU$ is given by the universal property of the inverse limit $FU$. 
\end{lemma}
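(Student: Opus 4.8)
The plan is to verify directly that $FU$, equipped with the restriction maps $h_{UK}\colon FU \to \Gamma K$ for $K \in \KS Y$ with $K \subseteq U$, satisfies the universal property of the inverse limit of the diagram $(h_{KL})$. First I would check that the index set $\{K \in \KS Y \mid K \subseteq U\}$, ordered by reverse inclusion, is indeed directed, so that the diagram is filtering: given $K, L \subseteq U$ compact-saturated, coherence of the locally compact space $Y$ gives $K \cup L$... no — rather, we need a \emph{lower} bound in reverse inclusion, i.e.\ a compact-saturated set containing both and contained in $U$; here $K \cup L$ works since a finite union of compact-saturated sets is compact-saturated and is still contained in $U$. (Alternatively one may note $\{K \mid K \subseteq U\}$ is closed under finite unions.) The restriction maps clearly commute with the diagram maps, so $FU$ with the $h_{UK}$ is a cone.

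Next I would establish the two halves of the universal property. For a competing cone, suppose $C$ is a $\sV$-algebra with maps $g_K \colon C \to \Gamma K$ for each $K \subseteq U$ compatible with the $h_{KL}$; I must produce a unique $g\colon C \to FU$ with $h_{UK} \circ g = g_K$. Working in the \'etale picture $p\colon E \to Y$, for $c \in C$ and $y \in U$, local compactness gives some compact-saturated $K$ with $y \in K \subseteq U$ (indeed $y$ has a compact-saturated neighbourhood inside $U$), and I set $g(c)(y) := g_K(c)(y) \in E_y$. Compatibility of the cone shows this is independent of the choice of $K$: if $y \in K' \subseteq U$ as well, then $y \in K \cup K' \subseteq U$ and $g_K(c), g_{K'}(c)$ both restrict to $g_{K \cup K'}(c)$ on $K \cup K'$, hence agree at $y$. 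So $g(c)\colon U \to E$ is a well-defined section of $p$; it is continuous because in a neighbourhood of any $y$ it agrees with the continuous section $g_K(c)$ on the compact-saturated $K$, and every point of $U$ has such a compact-saturated neighbourhood $\subseteq U$ — more carefully, $g(c)$ agrees with $g_K(c)$ on the open interior of $K$ inside $U$, and these interiors cover $U$ by local compactness, so $g(c)$ is locally continuous hence continuous. Thus $g(c) \in \Gamma U \cong FU$. That $g$ is a $\sV$-homomorphism follows because each $g_K$ is and the operations on sections are computed stalkwise. Finally $h_{UK}(g(c)) = g(c)|_K = g_K(c)$ by construction (using that $g_K(c)$ and $g(c)$ agree at every $y \in K$, again via the $K \cup K'$ argument with $K' = K$), and uniqueness of $g$ is forced since a section over $U$ is determined by its restrictions to a cover of $U$ by compact-saturated sets.

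For the last sentence, given $U \subseteq V$ open, both $FU$ and $FV$ are inverse limits over the respective filtered families, and $\{K \in \KS Y \mid K \subseteq U\} \subseteq \{K \in \KS Y \mid K \subseteq V\}$, so the cone $(h_{VK})_{K \subseteq U}$ exhibits $FV$ as a cone over the smaller diagram; the unique mediating map it induces into $\varprojlim = FU$ is exactly the restriction $FV \to FU$, since postcomposing with each $h_{UK}$ ($K \subseteq U$) returns $h_{VK}$, which is precisely the defining property of the restriction map.

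\textbf{Main obstacle.} The only genuinely delicate point is the continuity of the glued section $g(c)\colon U \to E$: one must be careful that "agrees with a continuous section on each compact-saturated $K \subseteq U$" really yields continuity, which it does because local compactness lets us cover $U$ by the \emph{interiors} of such $K$'s and continuity is a local property — but this needs $Y$ locally compact (not merely the existence of enough compact-saturated sets) and a clean statement that each $y \in U$ lies in the interior of some compact-saturated $K \subseteq U$. Everything else is a routine unwinding of the universal property of inverse limits together with the stalkwise description of the algebra operations on sections.
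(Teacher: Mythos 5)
Your proof is correct and follows essentially the same route as the paper's: both verify the universal property directly, using local compactness to cover $U$ by open sets each contained in a compact-saturated subset of $U$ (the paper via Lemma~\ref{lem:refinecover} with $n=1$, you via interiors of compact-saturated neighbourhoods of points of $U$), and then glue the resulting consistent family. The only difference is presentational: the paper glues by invoking the patch property of the abstract sheaf $F$, whereas you glue pointwise in the \'etale space and check continuity of the resulting section by hand --- two equivalent packagings of the same argument.
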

\begin{proof}
For each $K\subseteq U$, we have the restriction map $h_{UK}\colon FU \to\Gamma K$ and these commute with the restriction maps $h_{KL}\colon\Gamma K \to \Gamma L$. We prove that $(h_{UK} \colon FU \to \Gamma K)_{K \subseteq U}$ is the inverse limit.

Let $(s_K)_{K \subseteq U}$ be a consistent family for the diagram for $U$. By Lemma~\ref{lem:refinecover} in the case $n = 1$, for each $K \subseteq U$, pick $V_K$ open and $M_K$ compact-saturated in $Y$ such that $K \subseteq V_K \subseteq M_K \subseteq U$. Note that $(V_K)_{K \subseteq U}$ is an open covering of $U$ and, since $(s_K)_{K \subseteq U}$ is a consistent family, $(s_{M_K}|_{V_K})_{K \subseteq U}$ is a patching family of local sections. By the patching property of $F$, there is a unique section $s \in FU$ such that $s|_{V_K} = s_{M_K}|_{V_K}$ for each $K \subseteq U$. Since $(s_K)_{K \subseteq U}$ is a consistent family, it follows that $s|_K = s_K$ for each $K \subseteq U$, and $s$ is clearly the unique such section.
Finally suppose $V\subseteq U$ are opens of $Y$. The restriction map $(-)|_V\colon FU\to FV$ is carried by the isomorphisms $FU \cong \Gamma U$ and $FV \cong \Gamma V$ to the restriction map $h_{UV}\colon\Gamma U \to\Gamma V$, which is given uniquely by the universal property of the limit $\Gamma V$ since $\{h_{KL} \ | \ L \subseteq K \subseteq V\}\subseteq\{h_{KL} \ | \ L \subseteq K \subseteq U\}$.
\end{proof}

In the following proposition, we associate to a sheaf representation $F$ of an algebra $A$ a function $\theta_F$ into the congruence lattice of $A$. 
Although $\Con A$ is not in general a frame, we will call a function into $\Con A$ a \emph{frame homomorphism} if, and only if, it preserves finite meets and arbitrary joins. Note that it follows that the image of such a function will be a ($\wedge$,$\bigvee$)-substructure of $\Con A$ and a frame.
\begin{proposition}\label{prop:deftheta}
Let $A$ be an algebra and let $Y$ be a compact ordered space. For any soft sheaf representation $F$ of $A$ over $Y^{\uparrow}$ and any $K\in \KS Y^{\uparrow}$, the set 
\[ \theta_F(K) := \{(a,b) \in A \times A \ | \ K \subseteq \|a = b\|\}\]
is a congruence of $A$ and the ensuing map 
\[
\theta_F \colon (\KS Y^{\uparrow})^\op \to \Con A, \ K\mapsto \theta_F(K)
\]
is a frame homomorphism %
for which any two congruences in the image commute. 
\end{proposition}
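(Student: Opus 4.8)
The plan is to verify the four claims in turn --- that $\theta_F(K)$ is a congruence, that $\theta_F$ reverses order (equivalently sends $\subseteq$ to $\supseteq$), that $\theta_F$ preserves finite meets, that $\theta_F$ preserves arbitrary joins --- and finally that any two congruences in the image commute. The first is essentially routine: $\theta_F(K)$ is the kernel of the restriction-to-$K$ composed with the embedding $A \cong FY \cong \Gamma Y \to \Gamma K$, since $(a,b) \in \theta_F(K)$ iff $a|_K = b|_K$ as sections over $K$ (here one uses that the equalizer $\|a=b\|$ contains $K$ exactly when the two sections agree pointwise on $K$, as noted in the text after the definition of equalizer). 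A kernel of a homomorphism is a congruence, so this step is immediate. Order-reversal is equally immediate from the definition: if $K \subseteq K'$ then $K' \subseteq \|a=b\|$ implies $K \subseteq \|a=b\|$, so $\theta_F(K') \subseteq \theta_F(K)$.

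For finite meets, it suffices to treat binary meets and the top. In $(\KS Y^\uparrow)^{\op}$ the meet of $K_1$ and $K_2$ is the union $K_1 \cup K_2$ (which is again compact-saturated), and the top object is $\emptyset$; one checks $\theta_F(\emptyset) = A \times A$ and $\theta_F(K_1 \cup K_2) = \theta_F(K_1) \cap \theta_F(K_2)$ directly from $\|a=b\| \supseteq K_1 \cup K_2 \liff \|a=b\| \supseteq K_1$ and $\|a=b\| \supseteq K_2$. So far nothing has used softness. The real content is in preservation of joins, which I expect to be the main obstacle. In $(\KS Y^\uparrow)^{\op}$ an arbitrary join of a family $(K_i)_{i \in I}$ is the greatest compact-saturated set contained in all $K_i$, namely $\bigcap_{i} K_i$ if we first reduce to a filtered family (by replacing the family with the finite unions $K_F := \bigcup_{i \in F} K_i$, which is legitimate since we have already shown $\theta_F$ turns finite unions into finite intersections); the join of a filtered family is then its intersection, and $\bigcap_i K_i \in \KS Y^\uparrow$ by coherence of $Y^\uparrow$. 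The inequality $\theta_F(\bigcap_i K_i) \supseteq \bigvee_i \theta_F(K_i)$ is automatic from order-reversal. For the reverse inclusion, suppose $(a,b) \in \theta_F(\bigcap_i K_i)$, i.e.\ $\bigcap_i K_i \subseteq \|a=b\|$. Since $\|a=b\|$ is open and $(K_i)$ is filtered with intersection inside it, the Hofmann--Mislove Theorem (Theorem~\ref{thm:hofmis}) --- or directly Lemma~\ref{lem:compsatasintersection} applied to $K := \bigcap_i K_i$ together with compactness --- gives some $K_{i_0} \subseteq \|a=b\|$, i.e.\ $(a,b) \in \theta_F(K_{i_0}) \subseteq \bigvee_i \theta_F(K_i)$. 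This shows $\theta_F$ preserves filtered joins, and combined with preservation of finite meets (hence finite unions $\mapsto$ finite intersections, giving a cofinal reduction to the filtered case) it preserves arbitrary joins. I should double-check that the join of an empty family is handled, i.e.\ that $\theta_F(Y) = \Delta_A$ (the identity congruence): this is because a global section is determined by its values, so $a|_Y = b|_Y$ forces $a = b$ in $FY \cong A$. Notice that join-preservation, as just argued, still did not use softness; what softness buys is that the image of $\theta_F$ is all of what is needed and, crucially, the commutativity claim.

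For commutativity of any two congruences $\theta_F(K)$ and $\theta_F(L)$ in the image, I want to show $\theta_F(K) \circ \theta_F(L) = \theta_F(L) \circ \theta_F(K)$; by symmetry it is enough to prove one inclusion, say that given $(a,c) \in \theta_F(K) \circ \theta_F(L)$ there is $b'$ with $(a,b') \in \theta_F(L)$ and $(b',c) \in \theta_F(K)$. Unwinding: there is $b \in A$ with $a|_K = b|_K$ and $b|_L = c|_L$. I want a global section $b'$ agreeing with $c$ on $K$ and with $a$ on $L$. Consider the partial section defined on $K \cup L$ by taking the value of $c$ on $K$ and the value of $a$ on $L$ --- this is well defined and continuous on $K \cup L$ precisely because on the overlap $K \cap L$ the sections $a$ and $c$ agree: indeed on $K \cap L$ we have $a = b$ (from $a|_K = b|_K$) and $b = c$ (from $b|_L = c|_L$), so $a = c$ there. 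Thus we get a continuous section over the compact-saturated set $K \cup L$, and by softness (Definition~\ref{def:soft}) it extends to a global section $b' \in FY \cong A$. Then $b'|_L = a|_L$ gives $(a,b') \in \theta_F(L)$ and $b'|_K = c|_K$ gives $(b',c) \in \theta_F(K)$, so $(a,c) \in \theta_F(L) \circ \theta_F(K)$. The reverse inclusion is symmetric, and since this holds for all $K,L$ it holds in particular when we pass to finite unions and intersections, so every pair of congruences in the image commutes. The only subtlety to be careful about is the well-definedness and continuity of the patched section on $K \cup L$, which is exactly where the agreement-on-the-overlap computation via the intermediate element $b$ is used; everything else is bookkeeping.
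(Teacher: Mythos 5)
There are two genuine gaps, both stemming from underestimating where softness and the stable compactness of the base space are actually used.

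First, your argument for preservation of arbitrary joins is broken. In $(\KS Y^{\uparrow})^{\op}$ the join of $(K_i)_{i\in I}$ is $\bigcap_i K_i$, and the standard reduction to the filtered case goes through the \emph{finite intersections} $\bigcap_{i\in F}K_i$, not the finite unions $\bigcup_{i\in F}K_i$ (the latter compute meets in $(\KS Y^{\uparrow})^{\op}$, which you have already handled and which are irrelevant here). That reduction therefore requires preservation of \emph{binary} joins, i.e.\ the inclusion $\theta_F(K_1\cap K_2)\subseteq \theta_F(K_1)\vee\theta_F(K_2)$, which is not automatic and is precisely where softness enters; your explicit claim that ``join-preservation \dots{} still did not use softness'' is false. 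Your filtered-join argument via Hofmann--Mislove is fine, but without binary join preservation it does not yield arbitrary joins. The paper proves $\theta_F(K_1\cap K_2)\subseteq\theta_F(K_1)\circ\theta_F(K_2)$ directly (starting from $K_1\cap K_2\subseteq\|a_1=a_2\|$, patching, and extending by softness), and this single inclusion, sandwiched with $\theta_F(K_1)\circ\theta_F(K_2)\subseteq\theta_F(K_1)\vee\theta_F(K_2)\subseteq\theta_F(K_1\cap K_2)$, delivers binary join preservation and commutativity simultaneously. Your patching construction in the commutativity part contains the right idea, but you apply it only to pairs already known to lie in $\theta_F(K)\circ\theta_F(L)$, so the key claim about $\theta_F(K\cap L)$ never gets proved.

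Second, the continuity of the section patched on $K\cup L$ does not follow from agreement on the overlap; that only gives well-definedness. For a point $y\in K\setminus L$ you must produce an open neighbourhood $W$ of $y$ with $W\cap L\subseteq\|a=c\|$, and the analogous statement for points of $L$; this is exactly the weakly Hausdorff separation property of stably compact spaces (Lemma~\ref{lem:dualrefinecover}), not general topology. The paper sidesteps the issue by first using Lemma~\ref{lem:dualrefinecover} to find opens $U_1\supseteq K_1$, $U_2\supseteq K_2$ with $U_1\cap U_2\subseteq\|a_1=a_2\|$, patching over the \emph{opens} via the sheaf condition, and only then restricting to $K_1\cup K_2$ and invoking softness. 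Your direct patch over $K\cup L$ can be repaired (one can prove the needed separation using compactness of the patch topology and saturation of $K$ and $L$), but as written the step you dismiss as ``bookkeeping'' is where the real topological input lives.
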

\begin{proof}
Let $F$ be a soft sheaf representation of $A$. We identify $A$ with its image under the isomorphism between $A$ and $FY$. Denote by $e \colon A \times A \to \Omega Y^{\uparrow}$ the function which assigns to $(a,b) \in A \times A$ the open set $\|a = b\|$. 
Notice that, for any $K \in \KS Y^{\uparrow}$, 
\begin{equation}\label{eq:thetasof}
\theta_F(K) = e^{-1}(\sF_K),
\end{equation} 
where $\sF_K$ denotes the Scott-open filter $\{U \in \Omega \ | \ K \subseteq U\}$ corresponding to $K$ (Theorem~\ref{thm:hofmis}).
It is straight-forward to check that, for any filter $\sF$ in $\Omega Y^{\uparrow}$, $e^{-1}(\sF)$ is a congruence, so $\theta_F$ is well-defined. Since $e^{-1}$, viewed as a map from $\mathcal{P}(\Omega Y^{\uparrow})$ to $\mathcal{P}(A \times A)$, preserves arbitrary unions and intersections, and since finite meets and directed joins in both $\sigma\mathrm{Filt}(\Omega Y^{\uparrow})$ and $\mathrm{Con}(A)$  are calculated as finite intersections and directed unions, respectively, it is immediate from (\ref{eq:thetasof}) that $\theta_F$ preserves finite meets and directed joins. 

We now show that, for any $K_1, K_2 \in \KS Y^{\uparrow}$, we have
\begin{equation}\label{eq:joincomp}
\theta_F(K_1 \cap K_2) \subseteq \theta_F(K_1) \circ \theta_F(K_2).
\end{equation}
Suppose that $(a_1,a_2) \in \theta_F(K_1 \cap K_2)$, i.e., $K_1 \cap K_2 \subseteq \|a_1 = a_2\|$. By Lemma~\ref{lem:dualrefinecover}, pick $U_1$, $U_2$ open in $Y^{\uparrow}$ such that $K_i \subseteq U_i$ ($i = 1,2$) and $U_1 \cap U_2 \subseteq \|a_1 = a_2\|$. It follows that $\{a_i|_{U_i}\}_{i=1,2}$ is a compatible family of sections for the covering $\{U_1,U_2\}$ of $U_1 \cup U_2$, so, since $F$ is a sheaf, pick $b \in F(U_1 \cup U_2)$ such that $b|_{U_i} = a_i$ ($i=1,2$). Now $b|_{K_1 \cup K_2}$ is a section over a compact-saturated set, so by softness of $F$, pick a global section $c \in A$ such that $c|_{K_1 \cup K_2} = b|_{K_1 \cup K_2}$. Notice that, for $i = 1, 2$, $a_i|_{K_i} = b|_{K_i} = c|_{K_i}$, so $(a_i,c) \in \theta_F(K_i)$. Thus, $c$ witnesses that $(a_1,a_2) \in \theta_F(K_1) \circ \theta_F(K_2)$, as required.

Combining (\ref{eq:joincomp}) with the inclusions
\[\theta_F(K_1) \circ \theta_F(K_2) \subseteq \theta_F(K_1) \vee \theta_F(K_2) \subseteq \theta_F(K_1 \cap K_2),\]
where the last inclusion holds because $\theta_F$ is order-reversing, we conclude that 
\[\theta_F(K_1 \cap K_2) = \theta_F(K_1) \circ \theta_F(K_2) = \theta_F(K_1) \vee \theta_F(K_2).\] Thus, $\theta_F$ preserves finite joins and any two congruences in the images of $\theta_F$ commute.
\end{proof}
The crucial technical step that we need for our main theorem is to recover a sheaf representation $F$ from the map $\theta_F$ defined in Proposition~\ref{prop:deftheta}. To this end, we make the following definitions. 
\begin{definition}[Sheaf associated to a homomorphism] \label{def:Ftheta}
Let $\theta \colon (\KS Y^{\uparrow})^\op \to \Con(A)$ be a frame homomorphism such that any two congruences in the image of $\theta$ commute. For each $y \in Y$, denote by $\theta_y$ the congruence $\theta({\uparrow}y)$. Define the disjoint union of $\mathcal{V}$-algebras
\[ E_\theta := \bigsqcup_{y \in Y} A/{\theta_y},\]
and let $p \colon E_\theta \to Y$ be the function which maps each summand $A/{\theta_y}$ to its index $y$. For each $a \in A$, denote by $s_a \colon Y \to E_\theta$ the function defined by $s_a(y) := a/{\theta_y}$, for $y \in Y$. Equip $E_\theta$ with the topology generated by the collection
\[\sB=\{s_a(U) \ | \ a \in A, U \in \Omega Y^{\uparrow}\}.\] 
Note that $p \colon E_\theta \to Y^{\uparrow}$ is a continuous function, since for any $U \in \Omega Y^{\uparrow}$, we have $p^{-1}(U) = \bigcup_{a \in A} s_a(U)$. Denote by $F_\theta$ the sheaf of local sections of $p$. 
\end{definition}
It is almost immediate that, for each $y \in Y$, the kernel of the evaluation map $a \mapsto s_a(y)$ is exactly the congruence $\theta({\uparrow} y)$. We now prove that this connection between $\theta$ and $F_{\theta}$ `lifts' from stalks to all compact saturated sets.
\begin{lemma}\label{lem:KStheta}
Let $K\in{\KS} Y^{\uparrow}$ and $a,b\in A$. Then:
\begin{enumerate} 
\item $\theta(K) = \bigcup \{\theta(K') \ | \ K \prec K'\}$.
\item $K\subseteq \|s_a = s_b\|$ if, and only if, $(a,b)\in \theta(K)$.
\end{enumerate}
\end{lemma}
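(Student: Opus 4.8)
The plan is to prove the two statements in sequence, using part (1) as a tool for the non-trivial direction of part (2).

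For part (1), the inclusion $\bigcup\{\theta(K')\mid K\prec K'\}\subseteq\theta(K)$ is immediate since $K\subseteq K'$ implies $\theta(K')\subseteq\theta(K)$ (recall $\theta$ is order-reversing). For the reverse inclusion, the key fact is Lemma~\ref{lem:compsatasintersection}, which tells us that $\wayuparrow{K}=\{K'\in\KS Y^\uparrow\mid K\prec K'\}$ is filtered and $K=\bigcap\wayuparrow{K}$. Since $\theta$ is a frame homomorphism into $\Con A$, it preserves directed joins, and the diagram $(\theta(K'))_{K'\in\wayuparrow{K}}$ is directed (upward, as $\theta$ reverses the filtered order on $\wayuparrow{K}$). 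Hence $\bigvee_{K'\in\wayuparrow{K}}\theta(K')=\theta\bigl(\bigwedge_{K'}K'\bigr)$; but the meet in $(\KS Y^\uparrow)^\op$ is the intersection in $\KS Y^\uparrow$, which by Lemma~\ref{lem:compsatasintersection} equals $K$. Since the directed join of congruences is computed as a directed union, this gives $\theta(K)=\bigcup_{K'\in\wayuparrow{K}}\theta(K')$, as desired. One small point to verify is that $\wayuparrow{K}$ is nonempty (so the join is genuinely directed and not empty); this follows because $Y\in\KS Y^\uparrow$ and $K\prec Y$, or alternatively directly from Lemma~\ref{lem:compsatasintersection} applied with the open cover $\{Y\}$.

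For part (2), first note that $\|s_a=s_b\|$ is, in the \'etale space $p\colon E_\theta\to Y^\uparrow$, exactly the set of $y$ at which $s_a(y)=s_b(y)$, i.e.\ $\{y\in Y\mid a/\theta_y=b/\theta_y\}=\{y\in Y\mid (a,b)\in\theta_y\}$; this uses the remark just before the lemma that the kernel of $a\mapsto s_a(y)$ is $\theta({\uparrow}y)=\theta_y$, together with the fact (stated in the excerpt) that equalizers of local sections in an \'etale space are the pointwise-agreement sets. For the ``if'' direction, suppose $(a,b)\in\theta(K)$. For each $y\in K$ we have ${\uparrow}y\subseteq K$, so $\theta(K)\subseteq\theta({\uparrow}y)=\theta_y$, whence $(a,b)\in\theta_y$, i.e.\ $y\in\|s_a=s_b\|$; thus $K\subseteq\|s_a=s_b\|$. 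For the ``only if'' direction, suppose $K\subseteq\|s_a=s_b\|$. Since $\|s_a=s_b\|$ is open and $K\prec$ some $K'$ with $K'\subseteq\|s_a=s_b\|$ — more carefully, pick (by local compactness / way-below interpolation, or directly by Lemma~\ref{lem:refinecover} with $n=1$) an open $U$ and $K'\in\KS Y^\uparrow$ with $K\subseteq U\subseteq K'\subseteq\|s_a=s_b\|$, so $K\prec K'$. Then for every $y\in K'$, $(a,b)\in\theta_y$; I claim this forces $(a,b)\in\theta(K')$. This is the crux, and here I expect the main obstacle.

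\textbf{The main obstacle} is showing $(a,b)\in\theta(K')$ from the pointwise information $(a,b)\in\theta_y=\theta({\uparrow}y)$ for all $y\in K'$ — i.e.\ that $\theta(K')=\bigcap_{y\in K'}\theta({\uparrow}y)$, or at least $\supseteq$. Now $K'=\bigcap_{y\in K'}{\uparrow}y$ as an intersection of saturated sets, but $\theta$ need not send arbitrary intersections of compact-saturated sets to intersections (it preserves only finite meets and arbitrary \emph{joins}). The resolution is to use part (1): $\theta(K')=\bigcup\{\theta(K'')\mid K'\prec K''\}$, so it suffices to handle a $K''$ with $K'\prec K''$, i.e.\ $K'\subseteq W\subseteq K''$ for some open $W$. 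Since $K'$ is compact and $W$ covers it, and $W=\bigcup_{\text{finite}}(\text{basic opens})$... — alternatively, the cleanest route is: $K''$ way above $K'$ is compact, cover $K''$... Actually the intended argument is surely to combine (1) with the fact that $K'$ being compact is covered by finitely many ${\uparrow}y_i$-like sets inside which $s_a,s_b$ agree, then patch as in Proposition~\ref{prop:deftheta}; but since here $a,b$ are already \emph{global} sections that agree on all of $K'$, softness is not even needed. I would reduce to: $K'\prec K''$ gives open $W$ with $K'\subseteq W\subseteq K''$; it suffices to show $(a,b)\in\theta(L)$ for every $L\in\KS Y^\uparrow$ with $L\subseteq W$ sufficiently large, and then again every such $L$ is covered.\ The honest statement is that one iterates (1) and uses coherence of $Y^\uparrow$ to reduce to principal up-sets: given $K'\subseteq W$ open, for each $y\in W$ pick basic open $s_c(V_y)$-type neighborhoods, extract a finite subcover, and observe the corresponding finite intersection of the ${\uparrow}y_i$ sits between $K'$ and a member of $\wayuparrow{K'}$ — so $(a,b)\in\theta$ of that finite intersection $=\bigcap_i\theta({\uparrow}y_i)$ by finite-meet preservation, which holds since each $(a,b)\in\theta({\uparrow}y_i)$. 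Thus $(a,b)\in\theta(K')\subseteq\theta(K)$, completing part (2) and, via part (2) with general $K$ together with part (1), closing the loop.
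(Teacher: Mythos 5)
Parts (1) and the ``if'' direction of (2) are correct and essentially identical to the paper's proof. The genuine gap is in the ``only if'' direction of (2), at exactly the place you flag as ``the crux'': your sketch never correctly passes from the pointwise information $(a,b)\in\theta({\uparrow}y)$ to $(a,b)\in\theta$ of a compact-saturated set. Two specific problems. First, your opening reduction assumes $\|s_a=s_b\|$ is open; at this stage $s_a$ is not yet known to be continuous and $E_\theta$ not yet known to be \'etale (the paper proves openness of $\|s_a=s_b\|$ only in Proposition~\ref{prop:thetacomesfromsheaf}, \emph{using} this lemma), and in any case interpolating a $K'$ with $K\prec K'\subseteq\|s_a=s_b\|$ leaves you with a statement of exactly the same difficulty. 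Second, and more seriously, your proposed resolution does not work: invoking part (1) for $K'$ itself would require finding $K''\supseteq K'$ with $(a,b)\in\theta(K'')$, a \emph{stronger} statement than the goal; and the final step --- ``the finite intersection of the ${\uparrow}y_i$ sits between $K'$ and a member of $\wayuparrow{K'}$, so $(a,b)\in\theta$ of it equals $\bigcap_i\theta({\uparrow}y_i)$ by finite-meet preservation'' --- fails on both counts. A finite \emph{intersection} of principal up-sets does not contain $K'$ in general, and finite meets in $(\KS Y^{\uparrow})^{\op}$ are finite \emph{unions} of compact-saturated sets, so finite-meet preservation reads $\theta(K_1\cup K_2)=\theta(K_1)\cap\theta(K_2)$, not $\theta(K_1\cap K_2)=\theta(K_1)\cap\theta(K_2)$. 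A finite union of the ${\uparrow}y_i$ themselves will not cover $K$ either, since a compact-saturated set need not be a finite union of principal up-sets.

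The paper's argument applies part (1) not to $K$ (or $K'$) but to each principal up-set ${\uparrow}y$ for $y\in K$: since $(a,b)\in\theta({\uparrow}y)=\bigcup\{\theta(K')\mid{\uparrow}y\prec K'\}$, one obtains an open $U_y$ and a compact-saturated $K_y$ with ${\uparrow}y\subseteq U_y\subseteq K_y$ and $(a,b)\in\theta(K_y)$. The opens $U_y$ cover the compact set $K$, so finitely many suffice, say indexed by $M\subseteq K$ finite; then $K\subseteq\bigcup_{y\in M}K_y$ and $(a,b)\in\bigcap_{y\in M}\theta(K_y)=\theta\bigl(\bigcup_{y\in M}K_y\bigr)\subseteq\theta(K)$, where the equality is finite-meet preservation (meets in $(\KS Y^{\uparrow})^{\op}$ being unions) and the last inclusion is antitonicity of $\theta$ with respect to inclusion. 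This is the step your sketch gestures at (``covered by finitely many ${\uparrow}y_i$-like sets, extract a finite subcover'') but never executes: the essential move is fattening each ${\uparrow}y$ to an \emph{open} $U_y$ together with a compact-saturated $K_y$ already known to carry $(a,b)$, via part (1), \emph{before} compactness is invoked.
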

\begin{proof}
1. By Lemma~\ref{lem:compsatasintersection}, since $\theta$ is a frame homomorphism, and directed joins in $\Con A$ are calculated as unions.

2. ($\Rightarrow$) Suppose that $K\subseteq \|s_a = s_b\|$. Then, for each $y\in K$, we have $(a,b) \in \theta_y = \theta({\uparrow}y)$. By the first item, applied to $K = {\uparrow}y$, there exist $U_y  \in \Omega Y^{\uparrow}$ and $K_y\in \KS Y^{\uparrow}$ with $y\in U_y\subseteq K_y$ and $(a,b)\in\theta(K_y)$. Since $K$ is compact, there is a finite subset $M\subseteq K$ so that $K\subseteq \bigcup_{y \in M} U_y \subseteq \bigcup_{y \in M} K_y$.
Thus $(a,b)\in \bigcap \{ \theta(K_y)\mid y\in M\}=\theta\left({\bigcup \{ K_y\mid y\in M\}}\right)\subseteq\theta(K)$, where we have used that $\theta$ preserves finite unions.

($\Leftarrow$) Suppose that $(a,b) \in \theta(K)$. Let $y \in K$ be arbitrary. Then ${\uparrow}y \subseteq K$, so $(a,b) \in \theta(K) \subseteq \theta({\uparrow}y)$, which means that $y \in \|s_a = s_b\|$.
\end{proof}
Next, we recall a universal algebraic version of the Chinese Remainder Theorem, cf. \cite[Ex.~5.68]{Gratzer} and \cite[Lem.~1.1]{Wol1974}. 
\begin{lemma}\label{lem:chirem}
Suppose $\theta_1,\dots,\theta_n$ are congruences on an algebra $A$ that generate a distributive sublattice of $\Con A$ in which any two congruences commute. Suppose further that $a_1,\dots,a_n \in A$ are such that $(a_i,a_j) \in \theta_i \circ \theta_j$ for all $1 \leq i, j \leq n$. Then there exists $a \in A$ such that $(a,a_i) \in \theta_i$ for each $1 \leq i \leq n$.
\end{lemma}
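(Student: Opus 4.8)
The plan is to prove Lemma~\ref{lem:chirem} by induction on $n$, reducing each step to the case of two commuting congruences. In that two-congruence case the statement is essentially immediate: if $(a_1,a_2) \in \theta_1 \circ \theta_2$, then there is some $c \in A$ with $(a_1,c) \in \theta_1$ and $(c,a_2) \in \theta_2$, and then $a := c$ does the job, using only symmetry of congruences; neither distributivity nor commutativity is needed here. The base case $n=1$ is trivial with $a := a_1$.

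For the inductive step I would assume the statement for $n-1$ and argue as follows. Given $\theta_1,\dots,\theta_n$ and $a_1,\dots,a_n$ as in the hypothesis, first note that $\theta_1,\dots,\theta_{n-1}$ generate a sublattice of the lattice generated by $\theta_1,\dots,\theta_n$, hence again a distributive lattice of pairwise commuting congruences, so the induction hypothesis applies and produces $b \in A$ with $(b,a_i) \in \theta_i$ for $i = 1,\dots,n-1$. Put $\psi := \theta_1 \wedge \dots \wedge \theta_{n-1} = \theta_1 \cap \dots \cap \theta_{n-1}$. Since $\psi$ lies in the generated sublattice, it commutes with $\theta_n$, and I would apply the two-congruence case to $\psi$ and $\theta_n$ with the pair $(b,a_n)$; so the crux is to check $(b,a_n) \in \psi \circ \theta_n$. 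For each fixed $i < n$ we have $(b,a_i) \in \theta_i$ and, by hypothesis, $(a_i,a_n) \in \theta_i \circ \theta_n$; composing these and using transitivity of $\theta_i$ yields $(b,a_n) \in \theta_i \circ \theta_n = \theta_i \vee \theta_n$, where the last equality is the standard fact that the relational composite of two commuting congruences is their join in $\Con A$. Hence $(b,a_n) \in \bigcap_{i=1}^{n-1}(\theta_i \vee \theta_n)$, which by finite distributivity of the generated sublattice equals $\bigl(\bigcap_{i=1}^{n-1}\theta_i\bigr) \vee \theta_n = \psi \vee \theta_n = \psi \circ \theta_n$, the final equality because $\psi$ and $\theta_n$ commute. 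Picking a witness $c$ with $(b,c) \in \psi$ and $(c,a_n) \in \theta_n$, and setting $a := c$, we obtain $(a,a_n) \in \theta_n$, and for each $i < n$ we obtain $(a,a_i) \in \theta_i$ by composing $(a,b) \in \psi \subseteq \theta_i$ with $(b,a_i) \in \theta_i$. This completes the induction.

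The main obstacle, and the only step I would treat with care, is the interchange $\bigcap_{i=1}^{n-1}(\theta_i \vee \theta_n) = \bigl(\bigcap_{i=1}^{n-1}\theta_i\bigr) \vee \theta_n$ together with the need for $\psi$ to commute with $\theta_n$. Both rely on the hypothesis as stated for the \emph{whole} generated sublattice, not merely for the generating family $\{\theta_1,\dots,\theta_n\}$: it is precisely the assumption that every two congruences in the generated sublattice commute that lets $\psi = \theta_1 \cap \dots \cap \theta_{n-1}$ commute with $\theta_n$, and the distributivity of that sublattice that licenses the meet–join interchange (which extends from the two-generator case to the $(n-1)$-fold meet by an easy secondary induction). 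Everything else in the argument is routine manipulation with reflexivity, symmetry, and transitivity of congruences.
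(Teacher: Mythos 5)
Your proof is correct: the two-congruence case, the induction on $n$, the use of $\theta_i\circ\theta_n=\theta_i\vee\theta_n$ for commuting congruences, and the distributive interchange $\bigcap_{i=1}^{n-1}(\theta_i\vee\theta_n)=\bigl(\bigcap_{i=1}^{n-1}\theta_i\bigr)\vee\theta_n$ are all sound, and you correctly identify that the hypothesis must be read as applying to the whole generated sublattice so that $\psi=\bigcap_{i<n}\theta_i$ commutes with $\theta_n$. The paper itself gives no proof of this lemma, only citations to Gr\"atzer and to Wolf, and your induction is essentially the standard argument found in those sources, so there is nothing to add.
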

%
Combining Lemmas~\ref{lem:KStheta}~and~\ref{lem:chirem}, we obtain the following key result.
\begin{proposition}\label{prop:thetacomesfromsheaf}
Let $\theta \colon \KS Y^{\uparrow} \to \Con(A)$, $p \colon E_\theta \to Y^{\uparrow}$ and $F_\theta$ be as in Definition~\ref{def:Ftheta}. Then:
\begin{enumerate}
\item The map $p \colon E_\theta \to Y^{\uparrow}$ is an \'etale bundle of $\mathcal{V}$-algebras.
\item The assignment $a \mapsto s_a$ is an isomorphism from $A$ to the algebra of continuous global sections of $F_\theta$.
\item For every $K \in \KS Y^\uparrow$, the kernel of the restriction map $A \cong F_\theta Y \to F_\theta K$, $a \mapsto s_a|_K$, is equal to $\theta(K)$.
\end{enumerate}
\end{proposition}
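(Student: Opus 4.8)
I would prove the three items in the order (1), (3), (2), since (2) draws on both of the others.

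\emph{Claim (1).} First I would check that each $s_a\colon Y\to E_\theta$ is continuous; it suffices to consider preimages of the generating opens $s_b(U)$. Because the summands $A/\theta_y$ are disjoint, $a/\theta_y = b/\theta_z$ forces $y=z$, so $s_a^{-1}(s_b(U)) = U\cap\{y\in Y : (a,b)\in\theta_y\}$, and the set $\{y : (a,b)\in\theta_y\}$ is open: it coincides with the equalizer $\|s_a = s_b\|$, since by Lemma~\ref{lem:KStheta}(1) applied to the compact-saturated sets ${\uparrow}y$, every point of it has an open neighbourhood on which $s_a$ and $s_b$ agree. Granting continuity of the $s_a$, each $s_b(U)\in\sB$ is carried homeomorphically onto the open set $U$ by $p$ (the inverse being $s_b|_U$), so $p$ is a local homeomorphism, and since every $e = a/\theta_y$ lies in $s_a(Y)$, the bundle $p$ is étale. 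Continuity of the fibrewise operations is then the routine check familiar for étale spaces: for an $n$-ary symbol $f$ one has $f^{E_\theta}\circ(s_{a_1},\dots,s_{a_n}) = s_{f(a_1,\dots,a_n)}$, and the images $(s_{a_1},\dots,s_{a_n})(Y)$ form an open cover of the domain of $f^{E_\theta}$ on each of which $f^{E_\theta}$ therefore agrees with a continuous map.

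\emph{Claim (3).} Given (1) we may identify $F_\theta K$ with the algebra $\Gamma K$ of continuous sections of $p$ over $K$, so that $a\mapsto s_a|_K$ is a well-defined map $A\to\Gamma K$. Two such sections coincide, $s_a|_K = s_b|_K$, exactly when $K\subseteq\|s_a = s_b\|$, which by Lemma~\ref{lem:KStheta}(2) is equivalent to $(a,b)\in\theta(K)$; hence the kernel of the restriction map is $\theta(K)$.

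\emph{Claim (2).} That $a\mapsto s_a$ is a $\sV$-homomorphism into the algebra of global sections is immediate from the definition of the stalk operations. Injectivity is the case $K=Y$ of (3) together with the fact that $\theta(Y)$ is the identity congruence of $A$: the set $Y$ is the top element of $\KS Y^{\uparrow}$, hence the bottom of $(\KS Y^{\uparrow})^{\op}$, so $\theta(Y)$ is the empty join in $\Con A$. The real work is surjectivity. Given a continuous global section $\sigma$, for each $y$ choose $a_y$ with $\sigma(y) = a_y/\theta_y$; by continuity of $\sigma$ and openness of $s_{a_y}(Y)$ the set $U_y := \sigma^{-1}(s_{a_y}(Y))$ is an open neighbourhood of $y$ on which $\sigma = s_{a_y}$. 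Using Lemma~\ref{lem:refinecover} with $n=1$, shrink each $U_y$ to an open $V_y$ with $V_y\subseteq K_y\subseteq U_y$ for some compact-saturated $K_y$, and by compactness of $Y^{\uparrow}$ pick $y_1,\dots,y_n$ with $\bigcup_i V_{y_i} = Y$; write $K_i := K_{y_i}$ and $a_i := a_{y_i}$. On $K_i\cap K_j\subseteq U_{y_i}\cap U_{y_j}$ both $s_{a_i}$ and $s_{a_j}$ equal $\sigma$, so $K_i\cap K_j\subseteq\|s_{a_i} = s_{a_j}\|$, and by Lemma~\ref{lem:KStheta}(2) together with the fact that $\theta$ sends the finite meet $K_i\cap K_j$ of $(\KS Y^{\uparrow})^{\op}$ to the join $\theta(K_i)\vee\theta(K_j)$, we get $(a_i,a_j)\in\theta(K_i)\vee\theta(K_j) = \theta(K_i)\circ\theta(K_j)$, the last equality because congruences in the image of $\theta$ pairwise commute. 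The $\theta(K_i)$ lie in the image of $\theta$, which is a frame-homomorphic image of a frame and hence a distributive sublattice of $\Con A$ in which all pairs commute, so Lemma~\ref{lem:chirem} applies and yields $a\in A$ with $(a,a_i)\in\theta(K_i)$ for every $i$. By Lemma~\ref{lem:KStheta}(2), $s_a$ and $s_{a_i}$ agree on $K_i$, so $\sigma = s_{a_i} = s_a$ on $V_{y_i}$ for each $i$, and therefore $\sigma = s_a$ on $Y$.

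I expect the surjectivity half of (2) to be the main obstacle: it is the one place where the topology of $Y^{\uparrow}$ (Wilker's condition via Lemma~\ref{lem:refinecover}, plus compactness) has to be coupled to the algebra through the Chinese Remainder Theorem, and checking the hypotheses of Lemma~\ref{lem:chirem} requires the observation that the sublattice of $\Con A$ generated by $\theta(K_1),\dots,\theta(K_n)$ is already contained in the image of $\theta$ --- because finite meets and joins of the $\theta(K_i)$ are again of the form $\theta(K)$ with $K\in\KS Y^{\uparrow}$ --- so that it inherits both distributivity and pairwise commutativity. Everything else is bookkeeping with covers and restrictions, or the standard étale-space machinery.
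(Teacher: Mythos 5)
Your proposal is correct and follows essentially the same route as the paper: openness of the equalizers via Lemma~\ref{lem:KStheta}(1), the local-homeomorphism and operation-continuity checks for \'etaleness, injectivity from $\theta(Y)=\Delta_A$, and surjectivity by combining local compactness (Lemma~\ref{lem:refinecover}), compactness, Lemma~\ref{lem:KStheta}(2), and the Chinese Remainder Lemma~\ref{lem:chirem} applied to the distributive, pairwise-commuting image of $\theta$. One small wording slip: $K_i\cap K_j$ is the finite \emph{join}, not the meet, of $(\KS Y^{\uparrow})^{\op}$, which is why the frame homomorphism $\theta$ sends it to $\theta(K_i)\vee\theta(K_j)$; the identity you use is the right one.
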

\begin{proof}
1. We first note that for $a,b\in A$, the set $\|s_a = s_b\|$ is open in $Y^{\uparrow}$. Indeed, if $y \in \|s_a = s_b\|$, then $(a,b) \in \theta_y = \theta({\uparrow}y)$, so by Lemma~\ref{lem:KStheta}.1, pick $U$ open and $K'$ compact-saturated in $Y^{\uparrow}$ such that $(a,b) \in \theta(K')$ and ${\uparrow}y \subseteq U \subseteq K'$. By Lemma~\ref{lem:KStheta}.2, since $(a,b) \in \theta(K')$, we get $K' \subseteq \|s_a = s_b\|$, so $y \in U \subseteq \|s_a = s_b\|$.

In particular, for each $a \in A$, the map $s_a \colon Y^{\uparrow} \to E$ is continuous: for each $b\in A$ and $U\in\Omega Y^{\uparrow}$, we have $s_a^{-1}(s_b(U)) = \|s_a = s_b\| \cap U$, showing that the inverse image of any set in $\sB$, the generating set for the topology on $E$, is open. Since each $e \in E$ is of the form $e = a/{\theta_y}$ for some $a \in A$ and $y \in Y$, it now also follows that $p$ is \'etale, since $p|_{\im s_a}$ and $s_a$ are mutually inverse continuous maps between $Y^{\uparrow}$ and $\im s_a$. 
Notice also that, for each $n$-ary operation of $\mathcal{V}$-algebras $f$, the partial function $f^E \colon E^n \to E$ is continuous on its domain. Indeed, let $y \in Y$, $\overline{e} \in (E_y)^n$, and $s_a(U) \in \mathcal{B}$ be such that $f^E(\overline{e}) \in s_a(U)$, i.e., $f^E(\overline{e}) = s_a(y)$. For each $i = 1,\dots,n$, pick $b_i \in A$ such that $e_i = [b_i]_{\theta_y}$. Then $(f^{A}(b_1,\dots,b_n), a) \in \theta_y$. By the previous paragraph, the set $V := \|s_{f(b_1,\dots,b_n)} = s_a\|$ is a $Y^{\uparrow}$-open neighborhood of $y$. Now $s_{b_1}(U \cap V) \times \cdots \times s_{b_n}(U \cap V)$ is an open neighborhood of $\overline{e}$ in $E^n$ whose intersection with $\dom(f^E)$ is contained in $(f^E)^{-1}(s_a(U))$, as required.

2. The homomorphism
\begin{align} \label{eq:eta}
\eta \colon A &\to \prod_{y \in Y} A/{\theta_y}, \\
a &\mapsto s_a.\nonumber
\end{align}
is injective. Indeed, by Lemma~\ref{lem:KStheta}.2 applied to $K = Y$, if $\eta(a) = \eta(b)$, then $(a,b) \in \theta(Y)$, but $\theta(Y) = \Delta_A$ since $\theta$ preserves bottom, so $a = b$.

We show that the image of $\eta$ is $\Gamma Y$. We already noted in the proof of the first item that $s_a$ is continuous for every $a \in A$. Conversely, let $s \colon Y \to E$ be a continuous global section of $p$. 
For each $y \in Y$, pick $a_y \in A$ such that $s(y) = [a_y]_{\theta_y}$. Since $\|s = s_{a_y}\|$ is open and $Y^{\uparrow}$ is locally compact, pick $U_y  \in \Omega Y^{\uparrow}$ and $K_y\in {\KS}Y^{\uparrow}$ with $y\in U_y\subseteq K_y\subseteq \|s = s_{a_y}\|$. 
By compactness of $Y$, pick a finite $F \subseteq Y$ such that $(U_y)_{y \in F}$ covers $Y$. 
Note that, for any $y, z \in Y$, we have $K_{y} \cap K_{z} \subseteq \|s = s_{a_y}\| \cap \|s = s_{a_{z}}\| \subseteq \|s_{a_y} = s_{a_{z}}\|$. 
Using Lemma~\ref{lem:KStheta}.2 and the assumption that any two congruences in the image of $\theta$ commute, 
\[(a_y,a_z) \in \theta(K_y \cap K_z) = \theta(K_y) \vee \theta(K_z) = \theta(K_y) \circ \theta(K_z).\]
Since $\theta$ is a homomorphism, the sublattice of $\Con A$ generated by the congruences $(\theta(K_y))_{y  \in F}$ is the image under $\theta$ of the sublattice generated by $(K_y)_{y \in F}$; hence, it is distributive and any congruences in it commute pairwise. By Lemma~\ref{lem:chirem}, pick $a \in A$ such that $(a,a_y) \in \theta(K_y)$ for each $y \in F$. Now, for any $z \in Y$, pick $y \in F$ such that $z \in U_y \subseteq K_y$, and notice that $s(z) = s_{a_y}(z) = s_a(z)$. Thus, $\eta(a) = s$.

3. Note that $s_a|_K = s_b|_K$ if, and only if, $K \subseteq \|s_a = s_b\|$, which, by Lemma~\ref{lem:KStheta}.2, is equivalent to $(a,b) \in K$.
\end{proof}

We are now ready to prove our main theorem, by relativizing the result in Proposition~\ref{prop:thetacomesfromsheaf}.
\begin{theorem}\label{thm:main}
The assignment $F \mapsto \theta_F$ is a bijection between isomorphism classes of soft sheaf representations of $A$ over $Y^{\uparrow}$ and frame homomorphisms from $(\KS Y^{\uparrow})^\op$ to into subframes of $\Con A$ of pairwise commuting congruences.
\end{theorem}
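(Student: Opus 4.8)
The plan is to show that $F \mapsto \theta_F$ and $\theta \mapsto F_\theta$ are mutually inverse, up to isomorphism, using the two propositions already proved. Proposition~\ref{prop:deftheta} shows $\theta_F$ is a well-defined frame homomorphism into a subframe of pairwise commuting congruences, and Proposition~\ref{prop:thetacomesfromsheaf} shows that any such $\theta$ arises as $\theta_{F_\theta}$ and that $F_\theta$ is a sheaf representation of $A$. So the bulk of the work is: (a) $\theta_{F_\theta} = \theta$ for every frame homomorphism $\theta$ into commuting congruences; (b) $F_{\theta_F} \cong F$ for every soft sheaf representation $F$; and (c) that $F_\theta$ is itself \emph{soft}, so that the correspondence lands in the right place.

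For (a): by definition $\theta_{F_\theta}(K) = \{(a,b) \mid K \subseteq \|s_a = s_b\|\}$ computed in the \'etale space $E_\theta$, and by Lemma~\ref{lem:KStheta}.2 this is exactly $\theta(K)$. For (c): given a continuous section $s$ of $p \colon E_\theta \to Y^\uparrow$ over a compact-saturated $K$, one extends $s$ first to a local section over some open $U \supseteq K$ (using that $p$ is \'etale and $Y^\uparrow$ locally compact, exactly as in the argument inside the proof of Proposition~\ref{prop:thetacomesfromsheaf}.2), then covers $K$ by finitely many basic opens $U_y \subseteq K_y \subseteq \|s = s_{a_y}\|$, verifies the compatibility conditions $(a_y, a_z) \in \theta(K_y) \circ \theta(K_z)$ via Lemma~\ref{lem:KStheta}.2, and applies the Chinese Remainder Lemma~\ref{lem:chirem} to produce a global section $s_a$ with $s_a|_K = s$. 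This is essentially a relativization of the surjectivity argument already given for $K = Y$; I would state it as the observation that the same proof works verbatim with $Y$ replaced by $K$, since $K$ is itself compact and the cover can be taken inside $K$.

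For (b): let $F$ be a soft sheaf representation of $A$ with \'etale space $p \colon E \to Y^\uparrow$, and put $\theta := \theta_F$. One builds a map $E_\theta \to E$ by sending $a/\theta_y \in A/\theta_y$ to the value at $y$ of (the section corresponding to) $a \in A \cong FY$; this is well-defined because $\theta_y = \theta_F({\uparrow}y)$ is precisely the kernel of evaluation at $y$ (the stalk $E_y$ of $F$ is $A/\theta_y$ by softness — every element of $E_y$ is the value at $y$ of a section over some compact-saturated neighborhood, hence of a global section). One checks this map is a fiberwise isomorphism of $\mathcal{V}$-algebras, continuous, and open (it carries each basic open $s_a(U)$ of $E_\theta$ to the equalizer-type open $\{e \in p^{-1}(U) : e = a(p(e))\}$ of $E$, and conversely), hence a homeomorphism commuting with the projections; this is an isomorphism of \'etale spaces, and by the equivalence of \'etale spaces with sheaves it induces $F_\theta \cong F$, compatibly with the identifications of both global section algebras with $A$.

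The main obstacle is (b), specifically verifying that the stalks of $F$ are genuinely $A/\theta_F({\uparrow}y)$ — i.e.\ that evaluation $A \to E_y$ is \emph{onto} — which is where softness of $F$ is essential and where one uses Lemma~\ref{lem:sheafiso} together with the Hofmann--Mislove description of the Scott-open filter ${\uparrow}y$; and then checking that the resulting bijection $E_\theta \to E$ is bicontinuous rather than merely a fiberwise algebra iso. Everything else is bookkeeping: well-definedness and the frame-homomorphism/commutativity conditions are Proposition~\ref{prop:deftheta}, the reverse construction and its properties are Proposition~\ref{prop:thetacomesfromsheaf}, and ``isomorphism classes'' is exactly what makes $F_{\theta_F} \cong F$ suffice rather than requiring equality.
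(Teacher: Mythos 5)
Your overall architecture (establish $\theta_{F_\theta}=\theta$, prove $F_\theta$ is soft, prove uniqueness up to isomorphism) is the same as the paper's, but the softness step contains a genuine gap. You begin part (c) by extending the continuous section $s\colon K\to E_\theta$ to a local section over some open $U\supseteq K$, citing ``the argument inside the proof of Proposition~\ref{prop:thetacomesfromsheaf}.2''; that proof contains no such extension step --- it starts from a section already defined on all of $Y$, and the comparison sections $s_{a_y}$ it uses are global from the outset. Extending a continuous section from a compact-saturated set to an open neighbourhood in an \'etale space over a non-Hausdorff base is a nontrivial fact: the local sections through the points of $s(K)$ need not agree on overlaps, and forcing agreement requires a separate patching argument exploiting the order structure of the compact ordered space (Wilker's condition together with a separation of certain up- and down-sets). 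Once that extension is granted, the rest of your argument (finite cover $U_y\subseteq K_y\subseteq\|s=s_{a_y}\|$, compatibility via Lemma~\ref{lem:KStheta}.2, Lemma~\ref{lem:chirem}) does go through; but the extension itself must be proved, not asserted. The paper sidesteps it entirely by relativizing: it checks that $\theta'(L):=\theta(L)/\theta(Z)$ defines a frame homomorphism $\KS Z^{\uparrow}\to\Con(A/\theta(Z))$ into commuting congruences, that $F_{\theta'}$ is the restriction of $F_\theta$ to the subspace $Z=K$, and then applies Proposition~\ref{prop:thetacomesfromsheaf}.2 to $\theta'$ to identify $\Gamma Z$ with $A/\theta(Z)$, so that $\Gamma Y\to\Gamma Z$ is the surjective quotient map $A\to A/\theta(Z)$.

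For parts (a) and (b) your reasoning is correct but (b) takes a different route from the paper. You construct an explicit fiberwise isomorphism $E_{\theta_F}\to E$ and then must verify bicontinuity and openness by hand; the surjectivity of evaluation $A\to E_y$ does follow from softness applied to ${\uparrow}y$ as you say (Lemma~\ref{lem:sheafiso} is not really needed there). The paper instead observes that for any soft $\tilde F$ with $\theta_{\tilde F}=\theta$, softness makes $\Gamma Y\to\Gamma K$ surjective with kernel $\theta(K)$, so $\Gamma K\cong A/\theta(K)$ naturally in $K$, and then Lemma~\ref{lem:sheafiso} (each $FU$ is the inverse limit of the $\Gamma K$ for $K\subseteq U$) yields the natural isomorphism of sheaves without any point-set topology on the \'etale spaces. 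Your version is workable but front-loads exactly the verifications that the inverse-limit description is designed to avoid.
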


\begin{proof}
Let $\theta \colon (\KS Y^{\uparrow})^\op \to \Con A$ be a frame homomorphism such that any two congruences in the image of $\theta$ commute. By Proposition~\ref{prop:thetacomesfromsheaf}, $F_\theta$ is a sheaf representation of $A$ over $Y^{\uparrow}$ such that $\theta_{F_\theta} = \theta$. It remains to show that (1) $F_\theta$ is soft, (2) $F_\theta$ is up to isomorphism the unique soft sheaf representation of $A$ such that $\theta_{F_\theta} = \theta$.

1. Let $Z$ be any compact saturated set in $Y^{\uparrow}$. We need to prove that the restriction map $\Gamma Y \to \Gamma Z$ is surjective. Note that $Z$, with the subspace topology from $Y^{\uparrow}$, is a stably compact space, with patch topology and order the restrictions of the compact ordered space structure on $Y$ (cf., e.g., \cite[Prop~9.3.4]{Gou2013}). Let $A' := A/{\theta(Z)}$ and let $\theta' \colon \KS Z^{\uparrow} \to \Con(A')$ be defined, for $K \in \KS Z^{\uparrow}$, by $\theta'(K) := \theta(K)/\theta(Z)$. Then $\theta'$ is a frame homomorphism into a subframe of pairwise commuting congruences of $\Con(A')$, since $\KS Z^{\uparrow}$ is isomorphic to the interval $[\emptyset,Z]$ in $\KS Y^{\uparrow}$, and $\Con(A')$ is isomorphic to the interval $[\theta(Z),\nabla_A]$ in $\Con(A)$ (cf., e.g., \cite[Thm. 6.20]{BurSan2000}). We may therefore apply Definition~\ref{def:Ftheta} to the map $\theta' \colon \KS Z^{\uparrow} \to \Con(A')$ to obtain a sheaf $F_{\theta'}$. Notice from the definitions that $F_{\theta'}$ is the restriction of the sheaf $F_{\theta}$ to the subspace $Z$ of $Y$. By Proposition~\ref{prop:thetacomesfromsheaf}.2, applied to $\theta'$, the algebra of global sections of this restricted sheaf is $A' = A/\theta(Z)$. The restriction map $\Gamma Y \to \Gamma Z$ is isomorphic to the quotient map $A \to A'$, which is clearly surjective.

2. To show that $F$ is unique up to isomorphism, let $\tilde{F}$ be any soft sheaf representation of $A$ with $\theta_{\tilde{F}} = \theta$. For any $K \in \KS Y^{\uparrow}$, the algebras of local sections over $K$ for both $F$ and $\tilde{F}$ are isomorphic to $A/{\theta(K)}$, and these isomorphisms are natural in $K$. It follows from Lemma~\ref{lem:sheafiso} that the sheaves $F$ and $\tilde{F}$ are naturally isomorphic.
\end{proof}

Since $(\KS Y^{\uparrow})^\op$ and $\Omega Y^{\downarrow}$ are isomorphic for a compact ordered space $Y$ (see Proposition~\ref{prop:toggle}), we can reformulate Theorem~\ref{thm:main} in terms of $\Omega Y^{\downarrow}$. To this end, given a soft sheaf representation $F$ of $Y$, define 
\begin{align*}
\psi_F \colon \Omega Y^{\downarrow} &\to \Con A \\
U &\mapsto \theta_F(Y \setminus U) = \{(a,b) \in A^2 \ | \ U \cup \|a = b\| = Y\}.
\end{align*}
We obtain the following corollary.

\begin{corollary}\label{cor:main1}
The assignment $F \mapsto \psi_F$ is a bijection between isomorphism classes of soft sheaf representations of $A$ over $Y^{\uparrow}$ and frame homomorphisms $\Omega Y^{\downarrow} \to \Con A$ into subframes of pairwise commuting congruences.
\end{corollary}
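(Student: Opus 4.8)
\emph{Proof proposal.} The plan is to deduce the corollary from Theorem~\ref{thm:main} by composing the bijection established there with the frame isomorphism between $\Omega Y^{\downarrow}$ and $(\KS Y^{\uparrow})^{\op}$ induced by complementation. First I would record this isomorphism. By Proposition~\ref{prop:toggle}, the assignment $U \mapsto Y \setminus U$ is a well-defined bijection from $\Omega Y^{\downarrow}$ onto $\KS Y^{\uparrow}$, and it is order-reversing since $U \subseteq U'$ iff $Y \setminus U \supseteq Y \setminus U'$; hence it is an order isomorphism
\[
c \colon \Omega Y^{\downarrow} \xrightarrow{\ \sim\ } (\KS Y^{\uparrow})^{\op}.
\]
Since $\Omega Y^{\downarrow}$ is a frame and $c$ is an isomorphism of posets, $(\KS Y^{\uparrow})^{\op}$ is a frame as well and $c$ is an isomorphism of frames: finite meets in $(\KS Y^{\uparrow})^{\op}$ are finite unions in $\KS Y^{\uparrow}$, which under $c$ correspond to finite intersections of opens of $Y^{\downarrow}$, while arbitrary joins in $(\KS Y^{\uparrow})^{\op}$ are arbitrary intersections in $\KS Y^{\uparrow}$ (again compact-saturated, being $Y$ minus a $Y^{\downarrow}$-open by Proposition~\ref{prop:toggle}), corresponding under $c$ to arbitrary unions of opens of $Y^{\downarrow}$.

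Next I would observe that $\psi_F = \theta_F \circ c$. Unwinding the definitions, for $U \in \Omega Y^{\downarrow}$,
\[
\psi_F(U) = \theta_F(Y \setminus U) = \{(a,b) \in A^2 \mid Y \setminus U \subseteq \|a = b\|\} = \{(a,b) \in A^2 \mid U \cup \|a = b\| = Y\},
\]
which is exactly the displayed definition of $\psi_F$. Consequently, since $\theta_F$ is a frame homomorphism into a subframe of pairwise commuting congruences of $\Con A$ (Proposition~\ref{prop:deftheta}, as used in Theorem~\ref{thm:main}) and $c$ is a frame isomorphism, the composite $\psi_F$ preserves finite meets and arbitrary joins; moreover $\psi_F$ and $\theta_F$ have the same image, so the congruences in the image of $\psi_F$ still commute pairwise.

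Finally I would transport the bijection. Precomposition with $c$ is a bijection between the frame homomorphisms $(\KS Y^{\uparrow})^{\op} \to \Con A$ into subframes of pairwise commuting congruences and the frame homomorphisms $\Omega Y^{\downarrow} \to \Con A$ of the same kind, with inverse given by precomposition with $c^{-1}$ (which preserves the "pairwise commuting image" condition, as the image is unchanged). Composing this bijection of hom-sets with the bijection $F \mapsto \theta_F$ of Theorem~\ref{thm:main} yields the bijection $F \mapsto \theta_F \circ c = \psi_F$ asserted in the statement. I do not expect a genuine obstacle here: the only point requiring (routine) care is the verification in the first paragraph that $c$ transports the frame structure, i.e.\ that joins in $\Omega Y^{\downarrow}$ match meets-under-$\op$ in $\KS Y^{\uparrow}$ and conversely, which is immediate from Proposition~\ref{prop:toggle} together with the fact that complementation exchanges unions and intersections.
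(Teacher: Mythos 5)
Your proposal is correct and takes essentially the same route as the paper: the corollary is obtained by transporting the bijection of Theorem~\ref{thm:main} along the order isomorphism $\Omega Y^{\downarrow} \cong (\KS Y^{\uparrow})^{\op}$ given by complementation (Proposition~\ref{prop:toggle}), together with the observation that $\psi_F = \theta_F \circ c$. The paper leaves these verifications implicit; your write-up simply makes them explicit, and all the details check out.
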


We end this section by drawing a further corollary from Corollary~\ref{cor:main1}, which will connect our results to those of \cite{Wol1974,Vag1992}. Note that for algebras in a congruence-permutable variety\footnote{A variety in which any two congruences on an algebra of the variety commute.}, any subframe will do in the corollary above. Now, since any congruence lattice is an algebraic lattice, finite meets always distribute over directed joins. Thus, as soon as the algebra is congruence distributive, it follows that $\Con A$ is a frame. Further, since every congruence is an intersection of completely meet-irreducible congruences by Birkhoff's subdirect decomposition theorem (see, e.g., \cite[Thm.~II.8.6]{BurSan2000}), in particular, every element is an intersection of meet-irreducible elements of  $\Con A$. That is, in a  congruence-permutable and congruence-distributive variety, the congruence lattices are always spatial frames of pairwise commuting congruences. Finally, since congruence lattices are algebraic lattices, the compact elements form a basis. 

In the frame of opens of a stably compact space, since the compact saturated sets of the space are closed under finite intersections, it follows that the compact-open sets are closed under finite intersections as well. Thus, if a congruence lattice is isomorphic to the frame of opens of a stably compact space, then the compact-opens form a sublattice and a basis, that is, it is in fact the frame of opens of a spectral space. Conversely, as soon as the set of compact elements of a spatial frame is closed under finite meets and is join-generating, it is in fact the frame of opens of a spectral space. 

Recall that $A$ is said to have the \emph{Compact Intersection Property} (CIP) provided the intersection of two compact congruences on $A$ is again compact. 
The preceding two paragraphs yield the following general corollary of Corollary~\ref{cor:main1}.

\begin{corollary}\label{cor:main2}
Let $\mathcal V$ be a congruence-permutable and congruence-distributive variety. Then, for any algebra $A$ in $\mathcal V$, the lattice $\Con A$ is isomorphic to $\Omega Y^{\downarrow}$ for some Priestley space $Y$ if, and only if, $A$ has the CIP. In this case, $A$ has a soft sheaf representation over $Y^{\uparrow}$.
\end{corollary}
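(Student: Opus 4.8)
The plan is to deduce Corollary~\ref{cor:main2} from Corollary~\ref{cor:main1} together with the structural remarks assembled in the two paragraphs preceding the statement, so the work is mostly one of organizing implications rather than proving anything new from scratch.

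First I would establish the ``if'' direction. Assume $A$ has the CIP. Since $\mathcal V$ is congruence-distributive, $\Con A$ satisfies the infinite distributive law (binary meets distribute over arbitrary joins): indeed $\Con A$ is an algebraic lattice, so finite meets distribute over directed joins, and congruence-distributivity gives distributivity over finite joins, and these two facts combine to give the frame law. Hence $\Con A$ is a frame. By Birkhoff's subdirect representation theorem every congruence is an intersection of (completely) meet-irreducible congruences, so $\Con A$ is spatial. Because $\Con A$ is algebraic, its compact elements join-generate and, by the CIP, are closed under binary meets (the bottom congruence $\Delta_A$ is compact, so they form a bounded sublattice). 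A spatial frame whose compact elements form a join-generating sublattice is exactly the open-set frame of a spectral space; moreover that spectral space is compact (the top congruence $\nabla_A$ is compact since $A$ is finitely generated as an algebra over itself, or simply because $\nabla_A$ is generated by finitely many pairs), so it is a Priestley space $Y$ in its patch presentation, and $\Con A \cong \Omega Y^{\downarrow}$ where $Y^{\downarrow}$ is the corresponding stably compact (here: spectral) space. Here I would cite the standard fact that spectral spaces are exactly the stably compact spaces with a basis of compact-open sets and that these are the $Y^{\downarrow}$ of Priestley spaces $Y$.

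Next, the ``only if'' direction. Suppose $\Con A \cong \Omega Y^{\downarrow}$ for a Priestley space $Y$. Then, as noted in the excerpt, since the compact-saturated sets of the stably compact space $Y^{\downarrow}$ are closed under finite intersection, the compact-open sets are closed under finite intersection as well; for a Priestley space these compact-opens are precisely the clopen down-sets and they form a basis. Transporting this across the isomorphism $\Con A \cong \Omega Y^{\downarrow}$, the compact elements of $\Con A$ are closed under binary meet, which is exactly the CIP.

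Finally, for the last sentence of the statement: in the CIP case we have produced a Priestley space $Y$ with $\Con A \cong \Omega Y^{\downarrow}$, hence an isomorphism $\Omega Y^{\downarrow} \to \Con A$, which is in particular a frame homomorphism onto $\Con A$. Since $\mathcal V$ is congruence-permutable, \emph{any} two congruences of $A$ commute, so this frame homomorphism lands in a subframe of pairwise commuting congruences (namely all of $\Con A$). Corollary~\ref{cor:main1}, applied with this $\psi = $ the isomorphism, then yields a soft sheaf representation $F$ of $A$ over $Y^{\uparrow}$ with $\psi_F = \psi$, which is what we wanted. I expect the only real friction to be bookkeeping about which ``compact'' and ``spectral'' hypotheses are exactly needed --- in particular verifying that $\nabla_A$ is a compact congruence so that the associated spectral space is genuinely compact and thus a Priestley space, and checking that the clopen-down-set description of the compact-opens of $Y^{\downarrow}$ matches the abstract compact elements under the isomorphism; none of this is deep, but it is where one must be careful to cite the right duality facts.
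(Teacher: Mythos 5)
Your argument follows essentially the same route as the paper, whose ``proof'' of this corollary is exactly the two paragraphs preceding it: algebraicity plus congruence-distributivity makes $\Con A$ a frame, Birkhoff's subdirect decomposition makes it spatial, the CIP makes the (join-generating) compact elements a sublattice, hence $\Con A$ is the open-set frame of a spectral space, and congruence-permutability lets Corollary~\ref{cor:main1} be applied with the identity/isomorphism as the frame homomorphism. The one point where you go beyond the paper is also the one misstep: your justification that $\nabla_A$ is compact (``$A$ is finitely generated as an algebra over itself'', or ``$\nabla_A$ is generated by finitely many pairs'') is not valid in general --- compactness of the top congruence is a genuine additional condition, which the paper itself silently elides when it passes from ``spatial frame whose compact elements are closed under finite meets and join-generate'' to ``spectral space'' (where ``finite meets'' must include the empty meet); so you were right to flag it as the delicate point, but the reasons you offer do not establish it.
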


\section{Direct image sheaves and representations over varying spaces}\label{sec:morphisms}
In this short section, we consider how varying the base space of the sheaf and constructing a direct image sheaf is reflected at the level of frames of pairwise commuting congruences. We will apply the main result of this section, Theorem~\ref{thm:dirimsheaf}, to sheaf representations of MV-algebras in Section~\ref{sec:applications}.

Let $Y_1$ and $Y_2$ be compact ordered spaces, $f\colon Y_1\to Y_2$ a function, and $F_1\colon\Omega Y_1^{\uparrow}\to \sV$ a soft sheaf representation of an algebra $A$.
If $f^{\uparrow}\colon Y_1^{\uparrow}\to Y_2^{\uparrow}$ is continuous, then we obtain a sheaf
\[
F_2=F_1\circ\Omega f^{\uparrow}\colon Y_2^{\uparrow}\to \sV,
\]
known as the direct image sheaf under $f$ obtained from $F_1$. However, it is not clear in general whether $F_2$ is soft even if $F_1$ is.\\

As we have seen in Corollary~\ref{cor:main1}, soft sheaf representations of $A$ over a stably compact space $Y^{\uparrow}$ correspond to frame homomorphisms $\psi\colon \Omega Y^{\downarrow}\to \Con A$ into a frame of pairwise commuting congruences of $A$.
Now, suppose $F_1$ is a soft sheaf representation of $A$ and let $\psi_1 := \psi_{F_1} \colon\Omega Y_1^{\downarrow}\to \Con A$ be the corresponding frame homomorphism. Suppose further that $f^{\downarrow}\colon Y_1^{\downarrow}\to Y_2^{\downarrow}$ is continuous. In this case, we obtain a frame homomorphism into a frame of pairwise commuting congruences of $A$
\[
\psi_2 :=\psi_1\circ\Omega f^{\downarrow}\colon \Omega Y_2^{\downarrow}\to \Con A.
\]
Thus, using Corollary~\ref{cor:main1}, the soft sheaf representation $F_1$ over $Y_1^{\uparrow}$ yields a soft sheaf representation $F_{\psi_2}$ of $A$ over $Y_2^{\uparrow}$. However, it is not clear in general whether $F_{\psi_2}$ is a homomorphic image, and in particular a direct image sheaf, of the sheaf $F_1$.\\

If $f\colon Y_1\to Y_2$ is a morphism of compact ordered spaces, that is, if it is both continuous and order preserving, then we have the following theorem.

\begin{theorem}\label{thm:dirimsheaf}
Let $Y_1$ and $Y_2$ be compact ordered spaces, $f\colon Y_1\to Y_2$ a morphism of compact ordered spaces and $F_1\colon\Omega Y_1^{\uparrow}\to \sV$ a soft sheaf representation of $A$ with corresponding frame homomorphism $\psi_1\colon\Omega Y_1^{\downarrow}\to \Con A$.  Then $F_2=F_1\circ\Omega f^{\uparrow}\colon \Omega Y_2^{\uparrow}\to \sV$ is a soft sheaf representation of $A$ and the corresponding frame homomorphism is $\psi_2=\psi_1\circ\Omega f^{\downarrow}\colon \Omega Y_2^{\downarrow}\to \Con A$.
\end{theorem}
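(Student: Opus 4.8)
The plan is to establish the two assertions of Theorem~\ref{thm:dirimsheaf} in the following order: first that $\psi_2 = \psi_1 \circ \Omega f^{\downarrow}$ is a frame homomorphism into a subframe of pairwise commuting congruences (so that, by Corollary~\ref{cor:main1}, it corresponds to a \emph{soft} sheaf representation $F_{\psi_2}$ of $A$ over $Y_2^{\uparrow}$), and second that this soft representation is precisely the direct image sheaf $F_2 = F_1 \circ \Omega f^{\uparrow}$. The first assertion is nearly immediate: for a morphism $f$ of compact ordered spaces, both $f^{\uparrow} \colon Y_1^{\uparrow} \to Y_2^{\uparrow}$ and $f^{\downarrow} \colon Y_1^{\downarrow} \to Y_2^{\downarrow}$ are continuous, since $f$ being continuous for $\pi$ and order preserving means preimages of open up-sets (resp.\ down-sets) are again open up-sets (resp.\ down-sets). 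Hence $\Omega f^{\downarrow} \colon \Omega Y_2^{\downarrow} \to \Omega Y_1^{\downarrow}$ is a frame homomorphism, and composing with the frame homomorphism $\psi_1$ yields a frame homomorphism $\psi_2$; its image lies inside the image of $\psi_1$, which consists of pairwise commuting congruences. So the content is entirely in the second assertion.

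For the identification, I would use the uniqueness clause of Theorem~\ref{thm:main} (equivalently Corollary~\ref{cor:main1}): since $F_{\psi_2}$ is \emph{the} unique soft sheaf representation of $A$ with $\psi_{F_{\psi_2}} = \psi_2$, it suffices to prove that $F_2 = F_1 \circ \Omega f^{\uparrow}$ is (i) a sheaf representation of $A$, (ii) soft, and (iii) satisfies $\psi_{F_2} = \psi_2$. For (iii), unwind the definitions: for $U \in \Omega Y_2^{\downarrow}$ and $a,b \in A$, one has $(a,b) \in \psi_{F_2}(U)$ iff $U \subseteq \|a=b\|_{F_2}$, and since $F_2 = F_1 \circ \Omega f^{\uparrow}$ the equalizer $\|a=b\|_{F_2}$ in $Y_2^{\uparrow}$ is the largest open $W$ with $(f^{\uparrow})^{-1}(W) \subseteq \|a=b\|_{F_1}$; chasing this through shows $(a,b) \in \psi_{F_2}(U)$ iff $(f^{\downarrow})^{-1}(U) \subseteq \|a=b\|_{F_1}$ iff $(a,b) \in \psi_1(\Omega f^{\downarrow}(U)) = \psi_2(U)$. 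Here one needs the compatibility between the equalizer computed in $F_1$ and the preimage along $f$, which follows because $F_2 U' = F_1((f^{\uparrow})^{-1}U')$ by definition and restriction maps are respected. For (i), $F_2 Y_2 = F_1((f^{\uparrow})^{-1}Y_2) = F_1 Y_1 \cong A$. The genuinely substantive point is (ii), softness of $F_2$.

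The main obstacle is thus proving that $F_2$ is soft, i.e.\ that for every compact-saturated $Z \subseteq Y_2^{\uparrow}$ the restriction $F_2 Y_2 \to \Gamma_{F_2} Z$ is surjective. The natural route is to reduce to softness of $F_1$ via the map $f$. Given a continuous section $s$ of the \'etale space of $F_2$ over $Z$, I would pull it back along $f$: because $f$ is a morphism of compact ordered spaces, $(f^{\uparrow})^{-1}(Z)$ is compact-saturated in $Y_1^{\uparrow}$ (preimage of a closed up-set under a continuous order-preserving map between compact ordered spaces is again a closed up-set, using Proposition~\ref{prop:toggle}), and the stalk of $F_2$ at $y \in Y_2$ is $A/\psi_2(\text{point})$ while the stalk of $F_1$ at $x \in (f^{\uparrow})^{-1}(y)$ is $A/\psi_1(\text{point})$, with $\psi_2$ of the point above $\psi_1$ of the point since $f$ is order preserving; this gives a canonical map of \'etale spaces over $f$ compatible with the sheaf structures realized via the $s_a$. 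One then lifts $s$ to a section $\tilde s$ of $F_1$ over $(f^{\uparrow})^{-1}(Z)$, applies softness of $F_1$ to extend $\tilde s$ to a global section $a \in A$, and checks that $s_a$ viewed in $F_2$ restricts to $s$ on $Z$ — using that the sections of $F_2$ are exactly the $s_a$ (Proposition~\ref{prop:thetacomesfromsheaf}.2 applied to $\psi_2$, or directly that $A \cong F_2 Y_2$). The delicate step is verifying that an arbitrary continuous section over the \emph{non-open} set $Z$ does arise from the $s_a$ after this pullback-and-lift, which is where the stalkwise description together with the identification $F_2 = F_{\psi_2}$ from (iii) must be invoked; alternatively, one can bypass the explicit section-chasing by first proving (iii) and (i), concluding $F_2 \cong F_{\psi_2}$ from the injectivity half of Theorem~\ref{thm:main}'s correspondence on the subclass of sheaves already known to be soft — but that still requires knowing $F_2$ is soft to land it in the right class, so the softness argument via $f$ cannot be avoided.
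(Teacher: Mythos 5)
Your reduction is sound as far as it goes: $\psi_2=\psi_1\circ\Omega f^{\downarrow}$ is indeed a frame homomorphism into a set of pairwise commuting congruences, and it would suffice to show that $F_2$ is a soft sheaf representation of $A$ with $\theta_{F_2}=\theta_2$ and then invoke the uniqueness clause of Theorem~\ref{thm:main}. The gap is precisely at the step you yourself flag as the main obstacle: your sketch of the softness of $F_2$ does not go through as written. It presupposes that the stalk of the direct image sheaf $F_2$ at $y\in Y_2$ is $A/\theta_2({\uparrow}y)$ and that this yields a canonical map of \'etale spaces over $f$ along which sections over a compact-saturated $Z$ can be lifted. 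But the stalk of $F_2$ at $y$ is a priori only the colimit $\varinjlim_{U\ni y}F_1(f^{-1}U)$, and its identification with $A/\theta_2({\uparrow}y)$ is essentially equivalent to what you are trying to prove (for soft sheaves this identification follows \emph{from} softness, by extending a germ first to ${\uparrow}y$ and then globally). Appealing to ``the identification $F_2=F_{\psi_2}$ from (iii)'' inside the softness argument is circular, since the uniqueness half of Theorem~\ref{thm:main} only applies once $F_2$ is known to be soft --- a circularity you acknowledge in your last sentence but do not break.

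Your closing claim that ``the softness argument via $f$ cannot be avoided'' is in fact the opposite of what the paper does, and avoiding it is the whole point of its proof. The paper takes the soft sheaf $G$ over $Y_2^{\uparrow}$ with $\theta_G=\theta_2$ supplied by Theorem~\ref{thm:main} and proves directly that $G$ and $F_2$ are naturally isomorphic \emph{as sheaves on opens}, whence $F_2$ inherits softness. Concretely, Lemma~\ref{lem:sheafiso} presents $F_2U=F_1f^{-1}(U)$ as $\varprojlim\{A/\theta_1(M)\mid M\in\KS Y_1^{\uparrow},\ M\subseteq f^{-1}(U)\}$ and $GU$ as $\varprojlim\{A/\theta_1(f^{-1}(K))\mid K\in\KS Y_2^{\uparrow},\ K\subseteq U\}$, so everything reduces to showing the second system is cofinal in the first. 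This is where the hypothesis that $f$ is a morphism of compact ordered spaces enters: given $M\subseteq f^{-1}(U)$, the set $f[M]$ is compact, hence closed in the Hausdorff patch topology on $Y_2$, so $K:={\uparrow}f[M]$ is a closed up-set, i.e.\ compact-saturated in $Y_2^{\uparrow}$, with $K\subseteq U$ and $M\subseteq f^{-1}(K)$. If you want to salvage your plan, you should either adopt this cofinality argument or supply an honest, non-circular proof that every continuous section of the \'etale space of $F_2$ over a compact-saturated set extends globally; as it stands, that step is missing.
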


\begin{proof}
Denote by $\theta_1$ and $\theta_2$ the functions $(\KS Y^{\uparrow})^\op \to \Con A$ defined by $\theta_i(K) := \psi_i(Y \setminus K)$. 
By Theorem~\ref{thm:main}, pick a soft sheaf representation $G$ of $A$ over $Y_2^{\uparrow}$ such that $\theta_G = \theta_2$. We prove that $G$ is naturally isomorphic to $F_2$.

Let $U$ be open in $Y_2^{\uparrow}$. 
By definition, $F_2 U =F_1 f^{-1}(U)$. Since $Y_2^{\uparrow}$ is locally compact and $F_1$ is a  soft sheaf representation of $A$ with corresponding frame homomorphism $\theta_1$, Lemma~\ref{lem:sheafiso} gives 
\[
F_2 U = F_1 f^{-1}(U)=\varprojlim\{A/\theta_1(M)\mid M\in\KS Y_1^{\uparrow}\text{ and } M\subseteq f^{-1}(U)\}.
\]
On the other hand, since $\theta_G = \theta_2$,
\begin{align*}
GU& =\varprojlim\{A/\theta_2(K)\mid K\in\KS Y_2^{\uparrow}\text{ and } K\subseteq U\}\\
                         & =\varprojlim\{A/\theta_1(f^{-1}(K))\mid K\in\KS Y_2^{\uparrow}\text{ and } K\subseteq U\}.
\end{align*}
Thus, to show that $G$ and $F_2$ are naturally isomorphic, it suffices to show that the filtering limit systems 
\[
\sS=\{A/\theta_1(f^{-1}(K))\mid K\in\KS Y_2^{\uparrow}\text{ and } K\subseteq U\}
\]
and 
\[
\sT=\{A/\theta_1(M)\mid M\in\KS Y_1^{\uparrow}\text{ and } M\subseteq f^{-1}(U)\}
\]
are equivalent. 
To this end we first note that $\sS\subseteq\sT$. On the other hand, let $M \in \KS Y_1^{\uparrow}$ be such that $M\subseteq f^{-1}(U)$. Then $f[M]\subseteq U$, and thus $K:={\uparrow}f[M]\subseteq U$. Also, since $Y_2$ is Hausdorff, $f[M]$ is compact and thus closed in $Y_2$ and thus $K$ is compact-saturated in $Y_2^{\uparrow}$. By construction we have $M \subseteq f^{-1}(K)$, 
and thus $\sS$ is filtering in $\sT$.
\end{proof}

\section{Applications to distributive-lattice-ordered algebras}\label{sec:applications}
In this section, we apply Theorem~\ref{thm:main} and its corollaries to the specific setting of algebras with a distributive lattice reduct. First, we recall basic facts about distributive lattices and Stone-Priestley duality. We then prove a new, purely duality-theoretic, result on commuting congruences (Lemma~\ref{lem:interpolate}), which may also be of independent interest. We combine this result with Corollary~\ref{cor:main1} to obtain our main theorem about sheaf representations of distributive lattices (Theorem~\ref{thm:DLmain}). We end with illustrating how several sheaf representations of MV-algebras and commutative Gelfand rings available in the literature may be recovered using the general results.

Stone \cite{Sto1937} showed that any distributive lattice\footnote{In this paper we will assume all distributive lattices to be bounded, so we drop the adjective `bounded' for readability. This restriction is not necessary but it is convenient.} $A$ is isomorphic to the lattice of compact-open subsets of a topological space $X$. Moreover, there is up to homeomorphism a unique such \emph{spectral} space, i.e., a stably compact space in which the compact-open sets form a basis for the topology; we call this space $X$ the \emph{Stone spectrum} of $A$. As in the spectral theory of rings, the points of $X$ may be identified with \emph{prime ideals} of $A$, and any element $a \in A$ gives a compact-open set $\hat{a} \subseteq X$ of prime ideals not containing $a$; the assignment $a \mapsto \widehat{a}$ is an isomorphism between the lattice $A$ and the lattice of compact-open sets of $X$. We note that the order of inclusion on the prime ideals is the {\it reverse} of the order of specialization of the Stone spectrum $X$.

Since the Stone spectrum $X$ is in particular a stably compact space, recall from Section~\ref{sec:scs} that $X$ is $X^{\downarrow}$ for a unique compact ordered space $(X,\pi,\leq)$.\footnote{We choose the orientation of the order which fits with the inclusion of prime ideals rather than the order of specialization of the Stone spectrum.}  We call the latter the \emph{Priestley spectrum} of $A$, after Priestley \cite{Pri1970}, who characterized the compact ordered spaces arising in this manner as those which are \emph{totally order-disconnected}: whenever $x, x' \in X$ and $x \nleq x'$, there exists a clopen down-set $K$ of $X$ containing $x'$ and not $x$. The compact-open sets of the Stone spectrum $X^{\downarrow}$ are exactly the clopen down-sets of the Priestley spectrum. By the results cited in Section~\ref{sec:scs} (which post-date Priestley's results), the Stone and Priestley spectra of a distributive lattice are inter-definable. Still, some facts are more easily formulated using the Priestley spectrum, in particular the following result.


\begin{theorem}[Duality between congruences and closed subspaces \cite{Pri1970, Pri1972}]\label{thm:SP}
Let $A$ be a distributive lattice and let $(X,\pi,\leq)$ be the Priestley spectrum of $A$. The assignment
\[ C \mapsto \{(a,b) \in A \times A \ | \ \hat{a} \cap C = \hat{b} \cap C\}\]
is an isomorphism from $(\Cl X)^\op$ to $\Con A$, where $\Cl X$ is the dual frame of closed subsets of $(X,\pi)$ ordered by inclusion.
\end{theorem}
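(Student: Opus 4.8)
The plan is to combine the general Stone–Priestley machinery already recalled in the excerpt with the classical correspondence between lattice congruences and quotient lattices. Concretely, I would first observe that a closed subset $C$ of the Priestley space $(X,\pi)$ determines a sublattice of $\Omega X^{\downarrow}$-quotients, and hence a congruence on $A$ via the isomorphism $A\cong\{\hat a\mid a\in A\}$. So the candidate map $\Phi\colon (\Cl X)^\op\to\Con A$ sending $C$ to $\theta_C:=\{(a,b)\mid \hat a\cap C=\hat b\cap C\}$ must be shown to be (i) well-defined, i.e., $\theta_C$ really is a lattice congruence; (ii) order-reversing and, in fact, an order-embedding; and (iii) surjective. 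For (i), I would check directly that $\theta_C$ is reflexive, symmetric, transitive, and compatible with $\wedge$ and $\vee$: this is immediate because $a\mapsto \hat a\cap C$ is a lattice homomorphism from $A$ to the distributive lattice of sets $\{U\cap C\mid U\in\Omega X^{\downarrow}\}$, and $\theta_C$ is precisely its kernel.

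For the order-reversing and injectivity part, I would argue: if $C\subseteq D$ are closed, then clearly $\hat a\cap D=\hat b\cap D$ implies $\hat a\cap C=\hat b\cap C$, so $\theta_D\subseteq\theta_C$, giving order-reversal. For injectivity, suppose $C\not\subseteq D$; pick $x\in C\setminus D$. Since $D$ is closed in the Priestley topology and Priestley spaces are totally order-disconnected (hence the clopen down-sets and up-sets separate points from closed sets), one can find a clopen set $\hat a$ — a compact-open down-set in the Stone spectrum — with $x\in\hat a$ but $\hat a\cap D=\emptyset$; more carefully, one covers the complement of $D$ appropriately to get $a,b\in A$ with $\hat a\cap D=\hat b\cap D$ but $\hat a\cap C\ne\hat b\cap C$, witnessing $\theta_D\not\subseteq\theta_C$. (Taking $b$ with $\hat b=\emptyset$, i.e.\ $b=0$, works once we have $\hat a$ clopen with $\hat a\cap C\ne\emptyset=\hat a\cap D$; the existence of such $\hat a$ is exactly a separation property of Priestley spaces applied to the point $x$ and the disjoint closed set $D$, using compactness to pass from a point-separating family of clopen sets to a single one.) So $\Phi$ is an order-embedding.

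For surjectivity, I would use the fact that every congruence $\psi\in\Con A$ yields a quotient distributive lattice $A/\psi$, whose Priestley spectrum embeds as a closed subspace of $X$: indeed, the surjection $A\onto A/\psi$ dualizes to a topological-and-order embedding of the Priestley spectrum of $A/\psi$ onto a closed subset $C_\psi\subseteq X$ (this is the standard contravariant functoriality of Priestley duality, turning quotients into closed subspaces). One then checks that $\theta_{C_\psi}=\psi$: both are the kernel of $A\to A/\psi$ once we identify $A/\psi$ with the lattice of compact-opens of its spectrum restricted along the inclusion $C_\psi\into X$, i.e.\ with $\{\hat a\cap C_\psi\mid a\in A\}$. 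This closes the loop and shows $\Phi$ is onto, hence a bijection; combined with the order-reversal already established, it is an isomorphism $(\Cl X)^\op\xrightarrow{\ \sim\ }\Con A$.

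I expect the main obstacle to be the surjectivity step, specifically making precise the duality-theoretic claim that every congruence of $A$ arises from a closed subspace, and that the closed subspace one recovers from $\theta_C$ is again $C$. The cleanest route is to invoke the known duality between quotients of a distributive lattice and closed subsets of its Priestley space (this is essentially the content of the cited Priestley papers), so that the bijection is transported along duality rather than re-proved by hand; the remaining work is the bookkeeping identification $\theta_{C_\psi}=\psi$ and $C_{\theta_C}=C$, which is routine but must be done carefully because it is where the two descriptions — congruences as kernels versus closed subspaces as duals of quotients — are glued together. The point-separation argument for injectivity is also slightly delicate in the non-Hausdorff-quotient setting but follows from total order-disconnectedness together with compactness of $C$.
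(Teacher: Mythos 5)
A preliminary remark: the paper does not prove Theorem~\ref{thm:SP} at all --- it is stated as a classical result with citations to Priestley's papers, and only Corollary~\ref{cor:SP} is derived from it. So there is no in-paper proof to compare against. Your outline is the standard argument one would reconstruct from the literature, and its architecture is sound: well-definedness via the kernel of the lattice homomorphism $a\mapsto\hat{a}\cap C$, order-reversal, an order-embedding property, and surjectivity by dualizing the quotient $A\onto A/\psi$ to a closed embedding of spectra together with the bookkeeping $\theta_{C_\psi}=\psi$ and $C_{\theta_C}=C$.

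There is, however, one concrete flaw, in the injectivity step. You claim that for $x\in C\setminus D$ one can find a clopen down-set $\hat{a}$ with $x\in\hat{a}$ and $\hat{a}\cap D=\emptyset$, and then take $b=0$. This separation property fails in general: a clopen down-set containing $x$ necessarily contains ${\downarrow}x$, so it cannot avoid $D$ whenever $D$ meets ${\downarrow}x$. Concretely, for the three-element chain $A=\{0<c<1\}$ the Priestley spectrum is a two-point chain $x_1<x_2$; taking $D=\{x_1\}$, $C=X$ and $x=x_2$, the only clopen down-set containing $x_2$ is $X$ itself. The correct argument uses the fact that the sets $\hat{a}\setminus\hat{b}$ with $a,b\in A$ form a basis of the Priestley topology (they are the finite intersections of the subbasic clopen down-sets and their complements): since $X\setminus D$ is open and contains $x$, choose $a,b$ with $x\in\hat{a}\setminus\hat{b}\subseteq X\setminus D$ and replace $b$ by $a\wedge b$; then $(a,a\wedge b)\in\theta_D$ while $x$ witnesses $(a,a\wedge b)\notin\theta_C$. (In the chain example this produces the pair $(1,c)$, not a pair involving $0$.) Your hedge about ``covering the complement of $D$ appropriately'' points in this direction, but the recipe you actually give does not work, and this is precisely the step where total order-disconnectedness enters, so it needs to be carried out correctly. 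The remaining steps --- order-reversal and surjectivity via the standard functoriality of Priestley duality --- are fine as proposed.
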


\begin{corollary}\label{cor:SP}
The congruence lattice $\Con A$ of a distributive lattice $A$ is isomorphic to the frame of open sets of the space $(X,\pi)$ underlying the Priestley spectrum of $A$.
\end{corollary}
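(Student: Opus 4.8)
The plan is to deduce Corollary~\ref{cor:SP} directly from Theorem~\ref{thm:SP} by composing with complementation. First I would record the elementary fact that for any topological space $(X,\pi)$ the map $S \mapsto X \setminus S$ is an order-isomorphism from $(\Cl X)^{\op}$ — closed subsets ordered by \emph{reverse} inclusion — onto $\Omega(X,\pi)$, the lattice of open subsets ordered by inclusion: complementation is an inclusion-reversing involution, so it is a bijective order-isomorphism of complete lattices, and hence automatically preserves all meets and joins; in particular it is a frame isomorphism. (Concretely, arbitrary joins in $(\Cl X)^{\op}$, which are intersections of closed sets, correspond to arbitrary unions of open sets, and binary meets, which are closures of unions, correspond to binary intersections of opens.)

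Next I would take $X$ to be the space underlying the Priestley spectrum $(X,\pi,\leq)$ of $A$ (equivalently, $X^{\downarrow}$ is the Stone spectrum), and compose the isomorphism
\[
(\Cl X)^{\op} \xrightarrow{\ \sim\ } \Con A, \qquad C \mapsto \{(a,b)\in A\times A \mid \hat a \cap C = \hat b \cap C\}
\]
of Theorem~\ref{thm:SP} with the complementation isomorphism $\Omega(X,\pi) \xrightarrow{\sim} (\Cl X)^{\op}$ of the previous paragraph. The composite is a frame isomorphism $\Omega(X,\pi) \xrightarrow{\sim} \Con A$ sending an open set $U$ to the congruence $\{(a,b)\in A\times A \mid \hat a \cap (X\setminus U) = \hat b\cap(X\setminus U)\}$, which is exactly the assertion of Corollary~\ref{cor:SP}.

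I do not expect any genuine obstacle here: the corollary is a formal repackaging of the cited Priestley duality between congruences and closed subspaces. The only points worth a sentence of care are (i) that the phrase ``frame of open sets of $(X,\pi)$'' in the statement is read as $\Omega(X,\pi)$ ordered by inclusion with the usual frame structure, and (ii) that Theorem~\ref{thm:SP} yields a lattice (hence frame) isomorphism rather than merely a poset isomorphism — but an order-isomorphism between complete lattices preserves arbitrary meets and joins, so this is automatic and no separate verification of the frame operations is needed.
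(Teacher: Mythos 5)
Your proposal is correct and is essentially the paper's own proof: the paper also obtains the corollary by composing the isomorphism of Theorem~\ref{thm:SP} with the complementation isomorphism between $(\Cl X)^{\op}$ and $\Omega X$. The extra remarks you add (that complementation is an inclusion-reversing involution and that an order-isomorphism of complete lattices automatically preserves the frame structure) are accurate and merely make explicit what the paper leaves implicit.
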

\begin{proof}
Compose the isomorphism of Theorem~\ref{thm:SP} with the isomorphism between $(\Cl X)^\op$ and $\Omega X$ given by complementation.
\end{proof}

The correspondence between closed sets and congruences can be viewed as a consequence of the \emph{duality} (contravariant equivalence) between the categories of distributive lattices and Priestley spaces. We recall another related result from duality theory, which is not hard to prove directly.
\begin{proposition}[{\cite[Prop.~7]{PP1958}}]\label{prop:pointfree}
Let $X$ and $Y$ be $T_0$ sober spaces. There is a bijection between the set of continuous functions from $X$ to $Y$ and the set of frame homomorphisms from $\Omega Y$ to $\Omega X$, which sends a continuous function $q \colon X \to Y$ to the frame homomorphism $q^{-1} \colon \Omega Y \to \Omega X$.
\end{proposition}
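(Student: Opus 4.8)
The plan is to verify that $q \mapsto q^{-1}$ is well-defined and bijective in three steps: that each value $q^{-1}$ is a frame homomorphism, that the assignment is injective, and that it is surjective. I expect the first two steps to be routine and the third to be the substantive one.

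First I would check that, for continuous $q \colon X \to Y$, the inverse-image operation restricts to a map $q^{-1} \colon \Omega Y \to \Omega X$ which is a frame homomorphism. Continuity gives $q^{-1}(V) \in \Omega X$ for every $V \in \Omega Y$, and the set-theoretic identities for inverse images show that $q^{-1}$ sends $Y$ to $X$, preserves finite intersections, and preserves arbitrary unions; hence it preserves the top, finite meets, and arbitrary joins, so it is a frame homomorphism. For injectivity I would use that $Y$ is $T_0$: if $q_1 \neq q_2$, choose $x \in X$ with $q_1(x) \neq q_2(x)$; by $T_0$-ness there is an open $V \subseteq Y$ containing exactly one of $q_1(x)$, $q_2(x)$, say $q_1(x) \in V$ and $q_2(x) \notin V$, whence $x \in q_1^{-1}(V)$ but $x \notin q_2^{-1}(V)$, so $q_1^{-1} \neq q_2^{-1}$.

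The main step, and the place where sobriety of $Y$ is essential, is surjectivity. Given a frame homomorphism $h \colon \Omega Y \to \Omega X$, I would note that for each point $x \in X$ the evaluation $\mathrm{ev}_x \colon \Omega X \to \mathbf{2}$, defined by $\mathrm{ev}_x(U) = 1$ iff $x \in U$, is a frame homomorphism into the two-element frame $\mathbf{2} = \{0 < 1\}$. Consequently the composite $\mathrm{ev}_x \circ h \colon \Omega Y \to \mathbf{2}$ is again a frame homomorphism, equivalently a completely prime filter $\{V \in \Omega Y \mid x \in h(V)\}$ of $\Omega Y$. Sobriety of $Y$ is precisely the statement that every such frame map $\Omega Y \to \mathbf{2}$ is $\mathrm{ev}_y$ for a \emph{unique} point $y \in Y$ (uniqueness also uses $T_0$-ness). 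Setting $q(x)$ to be this unique point then gives, for every $V \in \Omega Y$, the equivalence $q(x) \in V \iff x \in h(V)$, that is, $q^{-1}(V) = h(V)$. In particular $q^{-1}(V)$ is open for each open $V$, so $q \colon X \to Y$ is continuous, and the displayed equality says exactly $q^{-1} = h$.

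The only real obstacle is confirming the precise form of sobriety being invoked, namely that sober spaces are exactly those whose points correspond bijectively to frame homomorphisms $\Omega Y \to \mathbf{2}$ (equivalently, to completely prime filters of $\Omega Y$). This is the standard characterization of sobriety underlying the point-set/point-free correspondence, and for the stably compact and spectral spaces occurring in this paper it is available in the references cited in Section~\ref{sec:scs}; so the argument above goes through without further difficulty.
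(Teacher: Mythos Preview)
The paper does not give its own proof of this proposition; it simply cites it as \cite[Prop.~7]{PP1958}. Your argument is correct and is precisely the standard proof from point-free topology: reduce to the fact that points of a $T_0$ sober space $Y$ are in bijection with frame homomorphisms $\Omega Y \to \mathbf{2}$ (equivalently, completely prime filters of $\Omega Y$), and then pull back along $h$. One small observation: your proof uses only continuity of $q$, the $T_0$ property of $Y$ (for injectivity), and sobriety of $Y$ (for surjectivity); the hypotheses on $X$ play no role, which is as it should be.
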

The last insight that we need in order to prove our main theorem about distributive lattices is the following lemma, which, to the best of our knowledge, is brand new.
\begin{lemma}\label{lem:interpolate}
Let $A$ be a distributive lattice and $X$ its Priestley spectrum. Let $\theta_1, \theta_2$ be congruences on $A$ and let $C_1$, $C_2$ be the corresponding closed subsets of $X$, respectively. The following are equivalent:
\begin{enumerate}
\item The congruences $\theta_1$ and $\theta_2$ commute;
\item For any $x_1 \in C_1$, $x_2 \in C_2$, if $\{i, j\} = \{1,2\}$ and $x_i \leq x_j$ then there exists $z \in C_1 \cap C_2$ such that $x_i \leq z \leq x_j$.
\end{enumerate}
\end{lemma}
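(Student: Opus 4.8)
The plan is to translate the algebraic condition on commuting congruences into a condition on the corresponding closed sets of the Priestley space, working through the characterization of commutativity in terms of the composite congruence $\theta_1 \circ \theta_2$. Recall that $\theta_1$ and $\theta_2$ commute if and only if $\theta_1 \circ \theta_2 = \theta_2 \circ \theta_1$, and in that case the composite equals the join $\theta_1 \vee \theta_2$. Since we are in the congruence-distributive setting of distributive lattices, the join $\theta_1 \vee \theta_2$ corresponds under Theorem~\ref{thm:SP} to the closed set $C_1 \cap C_2$. So the real content is to describe the composite relation $\theta_1 \circ \theta_2$ dually and compare it to the congruence of $C_1 \cap C_2$.

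First I would fix notation: for a closed subset $C \subseteq X$, write $\theta_C$ for the congruence $\{(a,b) \mid \hat a \cap C = \hat b \cap C\}$, so $\theta_i = \theta_{C_i}$ and $\theta_1 \vee \theta_2 = \theta_{C_1 \cap C_2}$. The inclusion $\theta_1 \circ \theta_2 \subseteq \theta_1 \vee \theta_2$ always holds, so commutativity is equivalent to the reverse inclusion $\theta_{C_1 \cap C_2} \subseteq \theta_1 \circ \theta_2$. Unwinding: $(a,b) \in \theta_1 \circ \theta_2$ means there is $c \in A$ with $\hat a \cap C_1 = \hat c \cap C_1$ and $\hat c \cap C_2 = \hat b \cap C_2$; in terms of clopen down-sets of $X$, writing $U = \hat a$, $V = \hat b$, $W = \hat c$, this says $U \cap C_1 = W \cap C_1$ and $W \cap C_2 = V \cap C_2$. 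So the proof reduces to a statement purely about clopen down-sets: the pair $(U,V)$ can be ``interpolated'' by such a $W$ whenever $U$ and $V$ agree on $C_1 \cap C_2$, if and only if condition (2) holds.

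For the direction (2) $\Rightarrow$ (1), given $U, V$ clopen down-sets agreeing on $C_1 \cap C_2$, I would try to build $W$ directly — a natural candidate is something like $W = (U \cap {\downarrow}(C_1 \cap \dots)) \cup \dots$, but more robustly one argues by a compactness/separation argument using total order-disconnectedness: the obstruction to finding $W$ is a pair of points $x_1 \in C_1$, $x_2 \in C_2$ that ``should'' be separated by $W$ in incompatible ways, i.e. $x_1 \leq x_2$ (or $x_2 \leq x_1$) with one in $U \setminus V$-type position and the other not; condition (2) produces $z \in C_1 \cap C_2$ between them, and since $U$ and $V$ agree on $C_1 \cap C_2$, membership of $z$ in $U$ (equivalently $V$) forces a contradiction. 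Concretely I expect to show the closed sets $K_1 := C_1 \cap {\downarrow}(\text{something built from } U,V)$ and a dually-built $K_2 \subseteq C_2$ must be separated, use (2) to see they are disjoint, and then invoke Priestley separation (a clopen down-set separating disjoint closed sets in the appropriate order relation, obtained by the standard compactness argument from total order-disconnectedness) to get $W$. For (1) $\Rightarrow$ (2), I would argue contrapositively: given $x_i \leq x_j$ in $C_1 \cap C_2$-complement position with no interpolating $z \in C_1 \cap C_2$, total order-disconnectedness lets one separate the compact set ${\uparrow}x_i \cap C_1 \cap C_2$ (empty if $i=1$... one must be careful about which of the two cases $\{i,j\}=\{1,2\}$ is active) appropriately and cook up clopen down-sets $U,V$ agreeing on $C_1 \cap C_2$ but with no valid $W$, witnessing $(a,b) \notin \theta_1 \circ \theta_2$ while $(a,b) \in \theta_1 \vee \theta_2$.

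The main obstacle I anticipate is the bookkeeping around the two symmetric cases in condition (2) (the roles of $C_1, C_2$ and the direction of the order) and, more substantively, correctly identifying which closed sets must be separated and verifying the separation via a compactness argument: one needs that the relevant sets are disjoint \emph{and} order-related in the way that Priestley separation requires (disjoint closed sets $P, Q$ with $P$ an up-set and $Q$ a down-set, or obtained as intersections ${\uparrow}$ of compacts, can be separated by a clopen down-set). Getting the geometry exactly right — i.e. pinning down $W$ as, say, $(U \cup {\uparrow}(C_1 \cap V^c)) \cap (V \cup {\downarrow}(\dots))^c$ or the correct variant, and checking $W \cap C_1 = U \cap C_1$ and $W \cap C_2 = V \cap C_2$ — is where the care is needed; the rest is routine duality dictionary.
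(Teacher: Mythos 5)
Your reduction is the one the paper itself uses: commutativity is equivalent to $\theta_1\vee\theta_2\subseteq\theta_1\circ\theta_2$, the join corresponds to $C_1\cap C_2$ under Theorem~\ref{thm:SP}, and both implications come down to a separation argument in the Priestley space. But the write-up stops exactly where the content of the lemma lies, and the concrete guesses you do make point the wrong way. For (2)~$\Rightarrow$~(1) the missing observation is this: a clopen down-set $\hat{d}$ satisfies $\hat{d}\cap C_2=\hat{a}\cap C_2$ and $\hat{d}\cap C_1=\hat{b}\cap C_1$ if and only if it contains the closed \emph{down-set} $K:={\downarrow}\bigl((\hat{a}\cap C_2)\cup(\hat{b}\cap C_1)\bigr)$ and misses the closed \emph{up-set} $L:={\uparrow}\bigl((C_2\setminus\hat{a})\cup(C_1\setminus\hat{b})\bigr)$; once $K\cap L=\emptyset$ is established, order-normality of Priestley spaces hands you $\hat{d}$. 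So the correct dichotomy is ``points $W$ must contain'' versus ``points $W$ must avoid'', each pooling pieces of \emph{both} $C_1$ and $C_2$ into a single down-closure, resp.\ up-closure --- not one closed set inside $C_1$ versus one inside $C_2$ as in your $K_1\subseteq C_1$, $K_2\subseteq C_2$; and a candidate of the form $U\cup{\uparrow}(C_1\setminus V)$ cannot be a clopen down-set, so that guess for $W$ is a dead end. The disjointness $K\cap L=\emptyset$ is precisely where hypothesis (2) enters: the only dangerous configurations are a must-avoid point of one $C_i$ lying below a must-contain point of the other $C_j$ (the same-index configurations are impossible because $\hat{a}$ and $\hat{b}$ are down-sets), and interpolating $z\in C_1\cap C_2$ between them contradicts the agreement of $\hat{a}$ and $\hat{b}$ on $C_1\cap C_2$. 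This case split is also why condition (2) must cover both orders $x_1\leq x_2$ and $x_2\leq x_1$.

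For (1)~$\Rightarrow$~(2) your contrapositive has the right shape, but the step you gesture at is aimed at the wrong set: one uses that ${\uparrow}x_1$ is a filtered intersection of complements of clopen down-sets and ${\downarrow}x_2$ a filtered intersection of clopen down-sets, so that ${\uparrow}x_1\cap{\downarrow}x_2\cap C_1\cap C_2=\emptyset$ plus compactness yields a single pair $a,b$ with $x_1\notin\hat{a}$, $x_2\in\hat{b}$ and $\hat{b}\cap C_1\cap C_2\subseteq\hat{a}$, i.e.\ $(b,a\wedge b)\in\theta_1\vee\theta_2$. Commutativity then supplies $c$ with $b\,\theta_2\,c\,\theta_1\,(a\wedge b)$, which forces $x_2\in\hat{c}$ and $x_1\notin\hat{c}$, and since $\hat{c}$ is a down-set this gives $x_1\nleq x_2$ directly --- there is no need to show that no interpolant exists. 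In short, the plan is sound and parallels the paper's proof, but the two constructions it defers (the sets $K$ and $L$ with their disjointness, and the compactness step producing $a,b$) \emph{are} the proof, and they are not yet supplied.
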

\begin{proof}
(1) $\Rightarrow$ (2). Let $x_1 \in C_1$, $x_2 \in C_2$, and without loss of generality suppose $i = 1$, $j = 2$. Now suppose that ${\uparrow} x_1 \cap {\downarrow} x_2 \cap C_1 \cap C_2 = \emptyset$; we prove that $x_1 \nleq x_2$. Since $X$ is totally order-disconnected, we have ${\uparrow} x_1 = \bigcap \{X \setminus \hat{a} \ | \ x_1 \not\in \hat{a}\}$ and ${\downarrow} x_2 = \bigcap \{\hat{b} \ | \ x_2 \in \hat{b}\}$. Note that these are intersections of filtered families. Therefore, since $X$ is compact, there exist $a, b \in A$ such that $x_1 \not\in \hat{a}$, $x_2 \in \hat{b}$, and $(X \setminus \hat{a}) \cap \hat{b} \cap C_1 \cap C_2 = \emptyset$. This means that $\hat{b} \cap C_1 \cap C_2 = \hat{a} \cap \hat{b} \cap C_1 \cap C_2$, so the elements $b$ and $a \wedge b$ are identified by the congruence corresponding to $C_1 \cap C_2$, which, by Theorem~\ref{thm:SP}, is $\theta_1 \vee \theta_2$. Since $\theta_1$ and $\theta_2$ commute, we have $\theta_1 \vee \theta_2 = \theta_2 \circ \theta_1$, so pick $c \in A$ such that $b \,{\theta_2}\, c \,{\theta_1}\, (a \wedge b)$. Since $x_2 \in \hat{b} \cap C_2$, we have $x_2 \in \hat{c}\, \cap C_2$ since $b \,{\theta_2}\, c$. On the other hand, $x_1 \in C_1$ and $x_1 \not\in \hat{a}$, so $x_1 \not\in \hat{c}$ since $c \,{\theta_1}\, (a \wedge b)$. Since $\hat{c}$ is a down-set, it follows that $x_1 \nleq x_2$.

(2) $\Rightarrow$ (1). Let $a, b \in A$ be such that $a\,({\theta_1}\,{ \circ}\,{ \theta_2})\,b$. Pick $c \in A$ such that $a\,{\theta_1}\,c\,{\theta_2}\,b$. Consider the following two closed subsets of $X$:
\begin{align*} 
K &:= {\downarrow} \left( (\hat{a} \cap C_2) \cup (\hat{b} \cap C_1) \right),\\
L &:= {\uparrow} \left( (C_2 \setminus \hat{a}) \cup (C_1 \setminus \hat{b}) \right).
\end{align*}

\noindent {\bf Claim.} $K$ and $L$ are disjoint.

\noindent {\it Proof of Claim.} A simple calculation  shows that
\[ K \cap L = \left( {\downarrow} (\hat{a} \cap C_2) \cap {\uparrow} (C_1 \setminus \hat{b}) \right) \cup \left( {\downarrow}(\hat{b} \cap C_1) \cap {\uparrow} (C_2 \setminus \hat{a})\right).\]
Reasoning towards a contradiction, suppose that $x \in K \cap L$, and without loss of generality assume $x \in {\downarrow} (\hat{a} \cap C_2) \cap {\uparrow} (C_1 \setminus \hat{b})$. Pick $x_1 \in C_1 \setminus \hat{b}$ and $x_2 \in \hat{a} \cap C_2$ such that $x_1 \leq x \leq x_2$. By (2), pick $z \in C_1 \cap C_2$ such that $x_1 \leq z \leq x_2$. Since $x_2 \in \hat{a}$ and $\hat{a}$ is a down-set, we have $z \in \hat{a}$. Since $z \in C_1$, and $a \,{\theta_1}\, c$, we have $z \in \hat{c}$. Since $z \in C_2$ and $c {\theta_2} b$, we have $z \in \hat{b}$. However, $x_1 \leq z$ and $x_1 \not\in \hat{b}$, which is a contradiction.\qed

By the claim and the order-normality of Priestley spaces \cite[Lem.~11.21(ii)(b)]{DavPri2002}, there exists $d \in A$ such that $K \subseteq \hat{d}$ and $L \cap \hat{d} = \emptyset$. It now follows from the definitions of $K$ and $L$ that $\hat{a}\, \cap C_2 = \hat{d}\, \cap C_2$ and $\hat{d}\, \cap C_1 = \hat{b}\, \cap C_1$, so that $a\,{\theta_2}\,d\,{\theta_1}\,b$, and $a\,({\theta_2}\,{\circ}\,{\theta_1})\,b$, as required.
\end{proof}

We now come to the main definition of this section.
\begin{definition}
Let $(X,\pi,\leq_X)$ be a Priestley space and $(Y,\tau,\leq_Y)$ a compact ordered space. We say a continuous function $q \colon X \to Y^{\downarrow}$ is an \emph{interpolating decomposition} of $X$ over $Y$ if, for all $x_1, x_2 \in X$, if $x_1 \leq_X x_2$, then there exists $z \in X$ such that $q(x_1)\leq_Y q(z)$, $q(x_2)\leq_Y q(z) $ and $x_1 \leq_X z \leq_X x_2$. 
%
\end{definition}
If $X$ is the Priestley spectrum of a distributive lattice $A$ and $q \colon X \to Y^{\downarrow}$ is a continuous function, denote by $\psi_q \colon \Omega Y^{\downarrow} \to \Con A$ the function obtained by composing the frame homomorphism $q^{-1} \colon \Omega Y^{\downarrow} \to \Omega X$ with the frame isomorphism $\Omega X \cong \Con A$ given in Corollary~\ref{cor:SP}.
\begin{proposition}\label{prop:intdec}
The following are equivalent:
\begin{enumerate}
\item The function $q$ is an interpolating decomposition;
\item Any two congruences in the image of $\psi_q$ commute.
\end{enumerate}
\end{proposition}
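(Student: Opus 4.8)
The plan is to reduce the claim to Lemma~\ref{lem:interpolate} by making explicit which closed subsets of $X$ correspond to the congruences in the image of $\psi_q$. By definition $\psi_q$ is $q^{-1}\colon\Omega Y^{\downarrow}\to\Omega X$ followed by the isomorphism $\Omega X\cong\Con A$ of Corollary~\ref{cor:SP}, which via Theorem~\ref{thm:SP} sends an open $V\subseteq X$ to the congruence with associated closed set $X\setminus V$. Hence for $U\in\Omega Y^{\downarrow}$ the closed subset of $X$ attached to $\psi_q(U)$ is $X\setminus q^{-1}(U)=q^{-1}(Y\setminus U)$. By Proposition~\ref{prop:toggle}, as $U$ ranges over $\Omega Y^{\downarrow}$ the set $D:=Y\setminus U$ ranges over precisely the closed up-sets of the compact ordered space $Y$; and since $D$ is closed in $Y^{\downarrow}$ and $q$ is continuous into $Y^{\downarrow}$, the preimage $C:=q^{-1}(D)$ is a $\pi$-closed subset of $X$ to which Theorem~\ref{thm:SP} applies. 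Thus the congruences in the image of $\psi_q$ are exactly those corresponding to closed sets of the form $q^{-1}(D)$ with $D$ a closed up-set of $Y$, and the proposition becomes: $q$ is an interpolating decomposition if and only if condition~(2) of Lemma~\ref{lem:interpolate} holds for every pair of closed sets of this form.

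For (1)$\Rightarrow$(2), fix $U_1,U_2\in\Omega Y^{\downarrow}$ and put $D_i:=Y\setminus U_i$, $C_i:=q^{-1}(D_i)$. Given $x_i\in C_i$ with, say, $x_1\leq_X x_2$, apply the interpolating-decomposition property of $q$ to the ordered pair $(x_1,x_2)$: it yields $z\in X$ with $x_1\leq_X z\leq_X x_2$ and $q(x_1)\leq_Y q(z)$, $q(x_2)\leq_Y q(z)$. Since $q(x_1)\in D_1$, $q(x_2)\in D_2$ and each $D_i$ is an up-set, $q(z)\in D_1\cap D_2$, i.e.\ $z\in C_1\cap C_2$; the case $x_2\leq_X x_1$ is handled by applying the property to $(x_2,x_1)$ instead. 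Hence condition~(2) of Lemma~\ref{lem:interpolate} holds for $C_1,C_2$, so $\psi_q(U_1)$ and $\psi_q(U_2)$ commute; as $U_1,U_2$ were arbitrary, (2) follows.

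For (2)$\Rightarrow$(1), fix $x_1\leq_X x_2$ in $X$ and take $D_i:={\uparrow}q(x_i)$, which is a closed up-set of $Y$ because $\leq_Y$ is a closed subset of the compact Hausdorff space $Y\times Y$. Put $C_i:=q^{-1}(D_i)=\{x\in X\mid q(x_i)\leq_Y q(x)\}$, so $x_i\in C_i$. The closed sets $C_1,C_2$ correspond to the congruences $\psi_q(Y\setminus D_i)$, which commute by hypothesis, so Lemma~\ref{lem:interpolate} applies; instantiating its condition~(2) at $x_1\in C_1$, $x_2\in C_2$, $x_1\leq_X x_2$ produces $z\in C_1\cap C_2$ with $x_1\leq_X z\leq_X x_2$. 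Unwinding $z\in C_i$ gives precisely $q(x_i)\leq_Y q(z)$, so $z$ witnesses the interpolating-decomposition condition for $(x_1,x_2)$; since the pair was arbitrary, $q$ is an interpolating decomposition.

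I do not expect a genuine obstacle here: the only delicate point is the bookkeeping in the first paragraph, namely that the closed set attached to $\psi_q(U)$ is the preimage of the \emph{closed up-set} $Y\setminus U$ --- it is exactly the up-set property of the $D_i$ that lets one push the interpolating point $z$, obtained in $X$, back into $C_1\cap C_2$. The definition of interpolating decomposition, with its two clauses $q(x_1)\leq_Y q(z)$ and $q(x_2)\leq_Y q(z)$, is precisely what makes condition~(2) of Lemma~\ref{lem:interpolate} for sets of the form $q^{-1}(D_i)$ unwind to it without any further work; so once the dictionary is set up the two implications are essentially immediate.
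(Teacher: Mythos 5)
Your proposal is correct and follows essentially the same route as the paper: both directions reduce to Lemma~\ref{lem:interpolate} via the dictionary $\psi_q(U)\leftrightarrow q^{-1}(Y\setminus U)$, using the up-set property of $Y\setminus U$ for (1)$\Rightarrow$(2) and the closed up-sets ${\uparrow}q(x_i)$ for (2)$\Rightarrow$(1). The only difference is presentational: you make the bookkeeping of the first paragraph explicit, which the paper leaves implicit.
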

\begin{proof}
(1) $\Rightarrow$ (2). Let $U_1, U_2 \in \Omega Y^{\downarrow}$. To show that $\psi_q(U_1)$ and $\psi_q(U_2)$ commute, it suffices to prove that the closed subsets $C_i := q^{-1}(Y \setminus U_i)$ ($i = 1,2$) satisfy property (2) in Lemma~\ref{lem:interpolate}. Let $x_i \in C_i$ and suppose without loss of generality that $x_1 \leq_X x_2$. By assumption, pick $z$ such that $x_1 \leq_X z \leq_X x_2$, $q(x_1)\leq_Y q(z)$ and $q(x_2)\leq_Y q(z) $ for $i = 1,2$. Since $U_i$ for $i = 1,2$ are open in $Y^{\downarrow}$, they are down-sets in the order on $Y$. It follows that $Y \setminus U_i$ is an up-set and since $q(x_i)\in Y \setminus U_i$  for $i = 1,2$, it follows that $q(z)\in (Y \setminus U_1)\cap (Y \setminus U_2)$ so that $z \in C_1 \cap C_2$, as required.

(2) $\Rightarrow$ (1). Let $x_1, x_2 \in X$ be such that $x_1 \leq_X x_2$. Write $y_i := q(x_i)$ and $C_i := q^{-1}({\uparrow}y_i)$ for $i = 1,2$. By continuity of $q$, $C_1$ and $C_2$ are closed, and clearly $x_i \in C_i$ for $i = 1, 2$. Moreover, note that, by definition of $\psi_q$, $C_i$ is the closed subset corresponding to the congruence $\psi_q(Y \setminus {\uparrow}y_i)$ under the isomorphism of Theorem~\ref{thm:SP}. The congruences $\psi_q(Y \setminus {\uparrow}y_1)$ and $\psi_q(Y \setminus {\uparrow}y_2)$ commute by assumption, so by Lemma~\ref{lem:interpolate}, there exists $z \in C_1 \cap C_2$ such that $x_1 \leq_X z \leq_X x_2$. The fact that $z\in C_i$ is equivalent to $q(x_i)\leq_Y q(z)$ for $i = 1, 2$, as required.
\end{proof}

We are now ready for the main theorem of this section. Let $A$ be a distributive lattice with Priestley spectrum $X$. If $F$ is a sheaf representation of $A$ over a stably compact space $Y^{\uparrow}$, recall that at the end of 
Section\,\ref{sec:main} we defined the function $\psi_F \colon \Omega Y^{\downarrow} \to \Con A$. By Corollary~\ref{cor:main1}, $\psi_F$ is a frame homomorphism. Denote by $\chi_F$ the frame homomorphism $\Omega Y^{\downarrow} \to \Omega X$ obtained by composing $\psi_F$ with the isomorphism $\Con A \cong \Omega X$ from Corollary~\ref{cor:SP}. By Proposition~\ref{prop:pointfree}, let $q_F \colon X \to Y^{\downarrow}$ be the unique continuous function such that $\chi_F = (q_F)^{-1}$.

\begin{theorem}\label{thm:DLmain}
Let $A$ be a distributive lattice with dual Priestley space $X$. The assignment $F \mapsto q_F$ is a bijection between isomorphism classes of soft sheaf representations of $A$ over $Y^{\uparrow}$ and interpolating decompositions of $X$ over $Y^{\downarrow}$.
\end{theorem}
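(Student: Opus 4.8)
The plan is to assemble this theorem from the pieces already developed, essentially by composing two bijections. First I would recall from Corollary~\ref{cor:main1} that $F \mapsto \psi_F$ is a bijection between isomorphism classes of soft sheaf representations of $A$ over $Y^{\uparrow}$ and frame homomorphisms $\Omega Y^{\downarrow} \to \Con A$ whose image is a subframe of pairwise commuting congruences. Next, using the frame isomorphism $\Con A \cong \Omega X$ from Corollary~\ref{cor:SP}, such frame homomorphisms correspond bijectively to frame homomorphisms $\chi \colon \Omega Y^{\downarrow} \to \Omega X$ whose image consists of ``pairwise commuting'' elements in the sense transported along the isomorphism. Finally, by Proposition~\ref{prop:pointfree} (applied to the sober spaces $X$ and $Y^{\downarrow}$), frame homomorphisms $\Omega Y^{\downarrow} \to \Omega X$ correspond bijectively to continuous functions $q \colon X \to Y^{\downarrow}$, via $\chi = q^{-1}$. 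Composing these three bijections gives a bijection $F \mapsto q_F$ between isomorphism classes of soft sheaf representations of $A$ over $Y^{\uparrow}$ and \emph{all} continuous functions $q \colon X \to Y^{\downarrow}$.

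The remaining work is to show that under this composite bijection, the side condition ``image of $\psi_F$ consists of pairwise commuting congruences'' corresponds exactly to ``$q_F$ is an interpolating decomposition''. But this is precisely the content of Proposition~\ref{prop:intdec}: for a continuous $q \colon X \to Y^{\downarrow}$, the associated $\psi_q$ (obtained by composing $q^{-1}$ with $\Omega X \cong \Con A$) has image consisting of pairwise commuting congruences if, and only if, $q$ is an interpolating decomposition. I would just need to observe that $\psi_{q_F} = \psi_F$ by construction of $q_F$ (it was defined precisely so that $\chi_F = (q_F)^{-1}$, hence $\psi_{q_F} = \psi_F$), so the side conditions match up on the nose.

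Concretely, I would write: by Corollary~\ref{cor:main1}, $F \mapsto \psi_F$ is a bijection onto the set of frame homomorphisms $\Omega Y^{\downarrow} \to \Con A$ whose image consists of pairwise commuting congruences. Composing with the isomorphism $\Con A \cong \Omega X$ of Corollary~\ref{cor:SP} and with the bijection of Proposition~\ref{prop:pointfree}, we see that $F \mapsto q_F$ is a bijection between isomorphism classes of soft sheaf representations of $A$ over $Y^{\uparrow}$ and continuous functions $q \colon X \to Y^{\downarrow}$ such that the corresponding frame homomorphism $\psi_q \colon \Omega Y^{\downarrow} \to \Con A$ has image consisting of pairwise commuting congruences. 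By Proposition~\ref{prop:intdec}, the latter condition on $q$ is equivalent to $q$ being an interpolating decomposition of $X$ over $Y^{\downarrow}$. This yields the claimed bijection.

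I do not expect any genuine obstacle here, since all the hard analysis has been isolated into Lemma~\ref{lem:interpolate} and Proposition~\ref{prop:intdec}; the only point requiring a moment of care is the bookkeeping that $\psi_{q_F}$ really does coincide with $\psi_F$, i.e.\ that the three bijections compose as advertised and that the ``commuting'' side conditions are literally the same subset on each side. If anything is mildly delicate, it is checking that Proposition~\ref{prop:pointfree} applies — that $X$ (a Priestley/spectral space) and $Y^{\downarrow}$ (a stably compact space) are both $T_0$ and sober, which is part of the standing facts recalled in Section~\ref{sec:scs}.
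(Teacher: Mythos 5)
Your proposal is correct and follows essentially the same route as the paper, whose proof is exactly the one-line observation that the composite of the bijections of Corollary~\ref{cor:main1}, Corollary~\ref{cor:SP} and Proposition~\ref{prop:pointfree} has image, by Proposition~\ref{prop:intdec}, precisely the interpolating decompositions. Your extra bookkeeping (that $\psi_{q_F}=\psi_F$ by the definition of $q_F$, and that $X$ and $Y^{\downarrow}$ are sober so Proposition~\ref{prop:pointfree} applies) is accurate and just makes explicit what the paper leaves implicit.
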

\begin{proof}
Note that, by Proposition~\ref{prop:intdec}, the image of the composition of the bijections of Corollary~\ref{cor:main1} and Proposition~\ref{prop:pointfree} consists exactly of interpolating decompositions.
\end{proof}

\begin{theorem}\label{thm:dirimsheafdual}
Let $A$ be a distributive lattice with Priestley spectrum $X$, and let $q\colon X\to Y^{\downarrow}$ be an interpolating decomposition of $X$. Any map $f\colon Y\to Z$ between compact ordered spaces which is continuous with respect to the down-topologies on $Y$ and $Z$ yields an interpolating decomposition of $X$ over $Z^{\downarrow}$ and thus a soft sheaf representation of $A$ over $Z^{\uparrow}$. If $f$ is also continuous with respect to the up-topologies, then the soft sheaf representation of $A$ over $Z^{\uparrow}$ is the direct image sheaf given by $f$ of the soft sheaf representation of $A$ over $Y^{\uparrow}$ corresponding to $q$.
\end{theorem}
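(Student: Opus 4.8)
The plan is to reduce Theorem~\ref{thm:dirimsheafdual} to the results already established, in particular Theorem~\ref{thm:DLmain} for the first assertion and Theorem~\ref{thm:dirimsheaf} for the second. First I would unpack what the maps do. By hypothesis $q\colon X\to Y^{\downarrow}$ is an interpolating decomposition, so by Theorem~\ref{thm:DLmain} (or rather the bijection underlying it) it corresponds to a soft sheaf representation $F_1$ of $A$ over $Y^{\uparrow}$, with associated frame homomorphism $\psi_1=\psi_{F_1}=q^{-1}$ (modulo the isomorphism $\Omega X\cong\Con A$ of Corollary~\ref{cor:SP}). If $f\colon Y\to Z$ is continuous for the down-topologies, then $f^{\downarrow}\colon Y^{\downarrow}\to Z^{\downarrow}$ is continuous, so the composite $f^{\downarrow}\circ q\colon X\to Z^{\downarrow}$ is a continuous function; on the point-free side it corresponds to $q^{-1}\circ\Omega f^{\downarrow}=\psi_1\circ\Omega f^{\downarrow}$, exactly the map $\psi_2$ appearing in Theorem~\ref{thm:dirimsheaf}.

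The first assertion then amounts to checking that $f^{\downarrow}\circ q$ is again an \emph{interpolating decomposition}, i.e.\ satisfies the interpolation condition from the definition, or equivalently, via Proposition~\ref{prop:intdec}, that any two congruences in the image of $\psi_2=\psi_1\circ\Omega f^{\downarrow}$ commute. I would argue this directly from the geometric definition: given $x_1\le_X x_2$, use that $q$ is an interpolating decomposition to produce $z\in X$ with $x_1\le_X z\le_X x_2$ and $q(x_1)\le_Y q(z)$, $q(x_2)\le_Y q(z)$; then apply $f$ and use that $f$ is order preserving — wait, here one must be careful, since the theorem only assumes $f$ is continuous for the down-topologies, not order preserving. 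The cleaner route is therefore the algebraic one: the image of $\psi_2$ is contained in the image of $\psi_1$, which consists of pairwise commuting congruences by Corollary~\ref{cor:main1} applied to $F_1$; hence the image of $\psi_2$ consists of pairwise commuting congruences, so by Corollary~\ref{cor:main1} again (or Theorem~\ref{thm:main}) $\psi_2$ determines a soft sheaf representation $F_{\psi_2}$ of $A$ over $Z^{\uparrow}$, and by Proposition~\ref{prop:intdec} the corresponding continuous map $f^{\downarrow}\circ q$ is an interpolating decomposition. This settles the first sentence of the theorem.

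For the second assertion, assume in addition that $f$ is continuous for the up-topologies, so $f^{\uparrow}\colon Y_1^{\uparrow}\to Y_2^{\uparrow}$ is continuous and the direct image sheaf $F_2=F_1\circ\Omega f^{\uparrow}$ is defined (here $Y_1=Y$, $Y_2=Z$). The natural idea is to invoke Theorem~\ref{thm:dirimsheaf} directly; however that theorem is stated for $f$ a \emph{morphism of compact ordered spaces}, i.e.\ continuous for the patch topology \emph{and} order preserving. So the real work is to check that a map which is continuous for both $\tau^{\uparrow}$ and $\tau^{\downarrow}$ is automatically a morphism of compact ordered spaces. This is exactly the point: continuity for $\pi^{\uparrow}$ and $\pi^{\downarrow}$ together gives continuity for the patch topology $\pi$ (which is generated by the opens of $\pi^{\uparrow}$ together with those of $\pi^{\downarrow}$, equivalently the complements of compact-saturated sets by Proposition~\ref{prop:toggle}), and preservation of the specialization order follows because continuity for $\pi^{\uparrow}$ forces $f$ to preserve the specialization order of $Y^{\uparrow}$, which by the discussion in Section~\ref{sec:scs} is precisely $\le$. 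Once $f$ is recognized as a morphism of compact ordered spaces, Theorem~\ref{thm:dirimsheaf} applies verbatim: it tells us $F_2=F_1\circ\Omega f^{\uparrow}$ is a soft sheaf representation of $A$ with corresponding frame homomorphism $\psi_1\circ\Omega f^{\downarrow}=\psi_2$. By the uniqueness in Theorem~\ref{thm:main}/Corollary~\ref{cor:main1}, the soft sheaf representation built in the first part from $\psi_2$ is isomorphic to $F_2$, i.e.\ it is the direct image sheaf of $F_1$ under $f$, as claimed.

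The main obstacle, then, is not any deep new argument but the bookkeeping around hypotheses: showing that "continuous for both topologies" implies "morphism of compact ordered spaces" so that Theorem~\ref{thm:dirimsheaf} is actually applicable, and tracking the various dualizing isomorphisms ($\Omega X\cong\Con A$, $\KS Y^{\uparrow}\cong\Omega Y^{\downarrow}$, and the point-free adjunction of Proposition~\ref{prop:pointfree}) carefully enough that $\psi_2=\psi_1\circ\Omega f^{\downarrow}$ on the nose and hence the sheaf produced really is the direct image sheaf rather than merely an abstractly isomorphic one. I would therefore write the proof as: (i) restate the correspondences; (ii) prove the topological lemma that $f$ continuous for $\pi^{\uparrow}$ and $\pi^{\downarrow}$ is a morphism of compact ordered spaces; (iii) apply Theorem~\ref{thm:dirimsheaf}; (iv) conclude via uniqueness in Theorem~\ref{thm:main}.
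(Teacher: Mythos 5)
Your proposal is correct and follows essentially the same route as the paper, whose proof is simply a one-line appeal to Theorem~\ref{thm:dirimsheaf}, the comments preceding it (which give exactly your argument that $\psi_2=\psi_1\circ\Omega f^{\downarrow}$ lands in pairwise commuting congruences), and Theorem~\ref{thm:DLmain}. You additionally spell out the one step the paper leaves implicit — that continuity for both the up- and down-topologies makes $f$ a morphism of compact ordered spaces, via Nachbin's patch-topology fact and preservation of the specialization order — which is a worthwhile clarification but not a departure from the paper's argument.
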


\begin{proof}
This is a direct consequence of Theorem~\ref{thm:dirimsheaf}, the comments preceding it and Theorem~\ref{thm:DLmain}.
\end{proof}

We end the paper by connecting our results to three concrete instances in the literature, namely MV-algebras, Gelfand rings, and distributive lattices themselves.\\

{\bf MV-algebras.} 
For readers familiar with MV-algebras and their sheaf representations, we show how the results of this paper apply to that setting. (For definitions and background on MV-algebras, cf. \cite{CDM,Mun2011}.) The variety of MV-algebras is congruence distributive since MV-algebras have a (distributive) lattice reduct and the variety of lattices is congruence distributive. Furthermore, the variety of MV-algebras is congruence-permutable (a fact that is sometimes referred to as the \emph{Chinese remainder theorem for MV-algebras}). Finally, the variety of MV-algebras satisfies CIP. In fact, the map
\begin{align}\label{eq:homtokcon}
 \lambda \colon A &\to \mathrm{KCon} A\\
 a &\mapsto \theta(0,a)\nonumber
\end{align}
is a lattice homomorphism (see e.g. \cite[Proposition~4.3]{GGM2014}) onto $\mathrm{KCon} A$, the lattice of compact congruences of $A$. That is, all finitely generated congruences are principal and $\theta(0,a)\cap\theta(0,b)=\theta(0,a\wedge b)$ so that the intersection of two compact congruences is again compact. Thus it follows from Corollary~\ref{cor:main2} that $\Con A$ is isomorphic to $\Omega Y^{\downarrow}$ where $Y$ is the Priestley spectrum of the lattice $\mathrm{KCon} A$, and $A$ has a soft sheaf representation over $Y^{\uparrow}$. 

The Priestley dual of the homomorphism $\lambda$ defined in (\ref{eq:homtokcon}) is an embedding of $Y$ into $X$, the Priestley spectrum of the distributive lattice reduct of $A$. Indeed, an alternative description of $Y$, more common in the literature on MV-algebras, is that it is the space of prime MV-ideals of $A$, which is a closed subspace of the Priestley space $X$ of prime lattice ideals of $A$. 
The compact-open sets of $Y^{\downarrow}$ are those of the form $\hat{a} = \{\mathfrak{p} \in Y \ | \ a \not\in \mathfrak{p}\}$, for $a \in A$. This space $Y^{\downarrow}$ is what is known in the literature on MV-algebras as the \emph{MV spectrum} of $A$ endowed with the Zariski topology. In this presentation, the isomorphism between $\Omega Y^{\downarrow}$ and $\Con A$ can be given explicitly by
\begin{align*}
\Omega Y^{\downarrow} &\to \Con A\\
U &\mapsto \bigcap_{\mathfrak{p}\not\in U} \theta_{\mathfrak{p}},
\end{align*}
where $\theta_{\mathfrak{p}}$ is the kernel of $A \to A/{\mathfrak{p}}$, which identifies $a, b \in A$ iff $(a \ominus b) \oplus (b \ominus a) \in \mathfrak{p}$.

Note that the sheaf representation obtained as explained above from Corollary~\ref{cor:main2} is a sheaf representation of $A$ over the MV spectrum endowed with the \emph{co-Zariski} topology. This is in fact the sheaf representation for MV-algebras presented in \cite{DP2010} by Dubuc and Poveda.

In \cite{GGM2014}, part of the results presented here were first developed to analyze sheaf representations of MV-algebra. Indeed, there, an interpolating map from $X$ to $Y^{\downarrow}$ was exhibited and the sheaf representation discussed above was seen to be definable from this map. Theorem~\ref{thm:DLmain} tells us that it is precisely \emph{soft} sheaf representations that are obtainable in this way. Given an MV-algebra $A$, the interpolating decomposition $k\colon X\to Y^{\downarrow}$ given in \cite{GGM2014} may be described as follows
\begin{align*}
k \colon X &\to  Y^{\downarrow}\\
\mathfrak{q} &\mapsto  \{a\in A \ | \ \forall c \in \mathfrak{q} \quad a\oplus c\in\mathfrak{q}\}.
\end{align*}
 
The maximal MV-ideals of an MV-algebra $A$, that is, the maximal points of the MV spectrum $Y$ of $A$ form a compact Hausdorff space $Z$ when endowed with the subspace topology from $Y^{\downarrow}$. Furthermore, the order on $Y$ is a root system so that each point $y\in Y$ is below a unique maximal point $m(y)\in Z$. In fact the map $m\colon Y\to Z$ is continuous as a map of stably compact spaces (where $Z$ carries the trivial order). As a consequence of Theorem~\ref{thm:dirimsheafdual}, we obtain a soft sheaf representation of $A$ over $Z$ and this sheaf representation is the direct image sheaf of the sheaf representation of $A$ over $Y^{\uparrow}$ under the map $m$.\\

{\bf Gelfand rings.}
%
A \emph{Gelfand ring} is a ring such that every prime ideal is contained in a unique maximal ideal; equivalently, the frame of radical ideals of the ring is normal \cite[Prop.~V.3.7]{Joh1982}. 
Banaschewski and Vermeulen \cite{BanVer2011}, building on results in \cite{Bko70,Mulvey}, characterize commutative Gelfand rings in terms of their sheaf representations. Their results imply that commutative Gelfand rings are exactly those rings which admit a \emph{locally soft} sheaf representation of local rings over a compact Hausdorff space.
Due to the fact that the base space is normal, local softness as defined in \cite{BanVer2011} is in fact equivalent to softness \cite[Prop.~2.6.2]{Bko70}, \cite[Thm. II.3.7.2]{God1958}. 

The soft sheaf representation of commutative Gelfand rings in \cite{BanVer2011} can be seen as an application of our Corollary~\ref{cor:main1}, as follows. For any commutative Gelfand ring $A$, the frame $\mathrm{Reg}(\mathrm{RId} A)$ of regular radical ideals of $A$ is a subframe of commuting congruences of the congruence lattice of $A$, where we have identified, as usual, the ideals of $A$ with the congruences on $A$. Moreover, the frame $\mathrm{Reg}(\mathrm{RId} A)$ is isomorphic to the open set lattice of the space $\mathrm{max}\,A$ of maximal ideals of $A$ with the Zariski topology, which is a compact Hausdorff space, and is therefore equal to its co-compact dual. Let $\theta \colon \Omega(\mathrm{max}\,A) \into \mathrm{Con}\,A$ be the injective frame homomorphism which sends an open set of maximal ideals to the congruence for the corresponding regular radical ideal. Then Corollary~\ref{cor:main1}  yields a soft sheaf representation of $A$ over $\mathrm{max}\,A$, corresponding to $\theta$, which is exactly the sheaf considered in \cite{BanVer2011}.\\


{\bf Distributive lattices and beyond.} 
The representation theorem for Boolean algebras provided by Stone's duality may be seen as a sheaf representation: Every Boolean algebra $B$ is isomorphic to the algebra of global sections of the sheaf $F\colon\Omega X\to \mathcal{BA}$ where $X$ is the dual space of $B$ and $F(U) := [U,2]$ is the set of all continuous functions from $U$ into the two-element discrete space. This is a soft sheaf representation whose stalks are all isomorphic to the two-element lattice; as a section over $X$ each element $a\,{\in}\,B$ is represented by the characteristic function of the corresponding clopen subset $\widehat{a}\subseteq X$. In this sense sheaf representations may be viewed as a generalization of Stone's representation theorem for Boolean algebras. 

By contrast, the representation theorem for distributive lattices provided by Stone's duality does not correspond to a sheaf representation for these lattices. The identity map from $X$ to $X^{\downarrow}$ is order preserving and thus in particular interpolating. However, the stalks of the corresponding sheaf are the lattices dual to the sets ${\uparrow}x$ for $x\in X$. These are all isomorphic to the two-element lattice if and only if the order on $X$ is trivial, which happens by Nachbin's Theorem \cite{Nachbin47} if and only if the lattice is in fact Boolean. Relative to sheaves, the representation theorem for distributive lattices provided by Stone's duality is more naturally seen through the perspective of Priestley duality: Let $A$ be a distributive lattice and $B$ its Booleanization. Then the topological space reduct of the Priestley space $X$ of $A$ is the Stone space of $B$, and the order on $X$ identifies $A$ as those global sections of the sheaf for $B$ which are not only continuous but also order preserving. Once the stalks have more than two elements, order preserving sections are not the right concept, but rather what was identified by Jipsen as so-called \emph{ac-labellings} \cite{Jip2009}. An investigation of the ensuing notion of so-called Priestley products in the spirit of this paper with applications to GBL-algebras is on-going work by the first author with Peter Jipsen and Anna Carla Russo.

\subsection*{Acknowledgements} The authors thank the anonymous referee for a thorough reading of the paper and many helpful comments and suggestions, which improved the paper. We want to acknowledge in particular that we adopted an alternative proof strategy for Theorem~\ref{thm:main} that was suggested to us by the referee.

\bibliographystyle{amsplain}
\bibliography{GvG2016.bib}

\providecommand{\bysame}{\leavevmode\hbox to3em{\hrulefill}\thinspace}
\providecommand{\MR}{\relax\ifhmode\unskip\space\fi MR }
\providecommand{\MRhref}[2]{%
  \href{http://www.ams.org/mathscinet-getitem?mr=#1}{#2}
}
\providecommand{\href}[2]{#2}
\begin{thebibliography}{10}

\bibitem{BanVer2011}
B.~Banaschewski and J.~C.~C. Vermeulen, \emph{{On the sheaf characterization of
  Gelfand rings}}, Quaestiones Mathematicae \textbf{34} (2011), 137--145.

\bibitem{Bko70}
R.~Bkouche, \emph{Couples spectraux et faisceaux associ{\'e}s, applications aux
  anneaux de fonctions}, Bull. Soc. Math. France \textbf{98} (1970), 253--295.

\bibitem{BurSan2000}
S.~Burris and H.~P. Sankappanavar, \emph{A course in universal algebra: The
  millennium edition}, 2000, Available online at
  \url{http://www.math.uwaterloo.ca/~snburris/htdocs/ualg.html}.

\bibitem{BurWer1980}
S.~Burris and H.~Werner, \emph{{Remarks on Boolean products}}, {Algebra
  Universalis} \textbf{10} (1980), 333--344.

\bibitem{CDM}
R.~Cignoli, I.~D'Ottaviano, and D.~Mundici, \emph{Algebraic foundations of
  many-valued reasoning}, Trends in Logic---Studia Logica Library, vol.~7,
  Kluwer Academic Publishers, Dordrecht, 2000.

\bibitem{Com71}
S.~D. Comer, \emph{Representation by algebras of sections over {B}oolean
  spaces}, J. Math. \textbf{38} (1971), 29--38.

\bibitem{Dav73}
B.~A. Davey, \emph{Sheaf spaces and sheaves of universal algebras}, Math. Z.
  \textbf{134} (1973), 275--290.

\bibitem{DavPri2002}
B.~A. Davey and H.~A. Priestley, \emph{{Introduction to Lattices and Order}},
  2nd ed., Cambridge University Press, May 2002.

\bibitem{DP2010}
E.~J. Dubuc and Y.~A. Poveda, \emph{{Representation theory of MV algebras}},
  Annals of Pure and Applied Logic \textbf{161} (2010), no.~8, 1024--1046.

\bibitem{GGM2014}
M.~Gehrke, S.J.~v. Gool, and V.~Marra, \emph{{Sheaf representations of
  MV-algebras and lattice-ordered abelian groups via duality}}, J. Algebra
  \textbf{417} (2014), 290--332.

\bibitem{GHK03}
G.~Gierz, K.~H. Hofmann, K.~Keimel, J.~D. Lawson, M.~Mislove, and D.~S. Scott,
  \emph{Continuous lattices and domains}, Encyclopedia of Mathematics and its
  Applications, vol.~93, Cambridge University Press, 2003.

\bibitem{God1958}
R.~Godement, \emph{{Topologie alg{\'e}brique et th{\'e}orie des faisceaux}},
  {Publications de l'institut de math{\'e}matique de l'universit{\'e} de
  Strasbourg}, vol. XIII, Hermann, Paris, 1958.

\bibitem{Gou2013}
J.~Goubault-Larrecq, \emph{{Non-Hausdorff Topology and Domain Theory}}, New
  Mathematical Monographs, vol.~22, Cambridge University Press, 2013.

\bibitem{Gratzer}
G.~Gr{\"a}tzer, \emph{Universal algebra}, 2 ed., Springer, 1979.

\bibitem{HM1981}
Karl~H. Hofmann and Michael~W. Mislove, \emph{Local compactness and continuous
  lattices}, Continuous Lattices (B.~Banaschewski and R.-E. Hoffman, eds.),
  Lecture Notes in Math., vol. 871, Springer, 1981, pp.~209--248.

\bibitem{Jip2009}
P.~Jipsen, \emph{Generalizations of boolean products for lattice-ordered
  algebras}, Annals of Pure and Applied Logic \textbf{161} (2009), 228--234.

\bibitem{Joh1982}
P.~T. Johnstone, \emph{Stone spaces}, Cambridge Studies in Advanced
  Mathematics, vol.~3, Cambridge University Press, Cambridge, 1986, Reprint of
  the 1982 edition.

\bibitem{Jung2004}
A.~Jung, \emph{Stably compact spaces and the probabilistic powerspace
  construction}, Electron. Notes Theor. Comput. Sci. \textbf{87} (2004), 15
  pages.

\bibitem{KS1994}
M.~Kashiwara and P.~Schapira, \emph{{Sheaves on Manifolds}}, 2nd ed.,
  Grundlehren der mathematischen Wissenschaften, vol. 292, Springer-Verlag
  Berlin Heidelberg GmbH, 1994.

\bibitem{Kei70}
K.~Keimel, \emph{Darstellung von {H}albgruppen und universellen {A}lgebren
  durch {S}chnitte in {G}arben; biregul{\"a}re {H}albgruppen}, Math.
  Nachrichten \textbf{45} (1970), 81--96.

\bibitem{KeiLaw05}
K.~Keimel and J.~Lawson, \emph{Measure extension theorems for {$T_0$}-spaces},
  Topol. Appl. \textbf{149} (2005), no.~1-3, 57--83.

\bibitem{KP1994}
K.~Keimel and J.~Paseka, \emph{{A direct proof of the Hofmann-Mislove
  theorem}}, Proc. Amer. Math. Soc. \textbf{120} (1994), no.~1, 301--303.

\bibitem{MM}
S.~Mac Lane and I.~Moerdijk, \emph{{Sheaves in Geometry and Logic: A First
  Introduction to Topos Theory}}, Springer, 1992.

\bibitem{Lawson2011}
J.~Lawson, \emph{Stably compact spaces}, {Mathematical Structures in Computer
  Science} \textbf{21} (2011), 125--169.

\bibitem{LS73}
L.~Lipshitz and D.~Saracino, \emph{The model companion of the theory of
  commutative rings without nilpotent elements}, Proc. Amer. Math. Soc.
  \textbf{38} (1973), 381--387.

\bibitem{Mac73}
A.~Macintyre, \emph{Model-completeness for sheaves of structures}, Fundamenta
  Mathematicae \textbf{81} (1973), 73--85.

\bibitem{Mon90}
J.~Monk, \emph{Cardinal functions on {B}oolean algebras}, Lectures in
  Mathematics ETH Z{\"u}rich, Birkh{\"a}user Verlag, 1990.

\bibitem{Mulvey}
C.~J. Mulvey, \emph{{A Generalisation of Gelfand Duality}}, J. Algebra
  \textbf{56} (1979), 499--505.

\bibitem{Mun2011}
D.~Mundici, \emph{{Advanced {\L}ukasiewicz calculus and {MV}-algebras}}, Trends
  in Logic---Studia Logica Library, vol.~35, Springer, Dordrecht, 2011.

\bibitem{Nachbin47}
L.~Nachbin, \emph{Une propri{\'e}t{\'e} charact{\'e}ristique des alg{\`e}bres
  bool{\'e}iennes}, Portugaliae Mathematica \textbf{6} (1947), 115--118.

\bibitem{PP1958}
D.~Papert and S.~Papert, \emph{Sur les treillis des ouverts et les
  paratopologies}, S{\'e}minaire Ehresmann (topologie et g{\'e}om{\'e}trie
  diff{\'e}rentielle) (1re ann{\'e}e (1957-8)), no.~1, 1--9.

\bibitem{Pri1970}
H.~A. Priestley, \emph{Representation of distributive lattices by means of
  ordered stone spaces}, Bull. Lond. Math. Soc. \textbf{2} (1970), 186--190.

\bibitem{Pri1972}
\bysame, \emph{{Ordered topological spaces and the representation of
  distributive lattices}}, Proc. London Math. Soc. \textbf{3} (1972), no.~24,
  507--530.

\bibitem{RuYa2008}
W.~Rump and Y.~C. Yang, \emph{Jaffard-{O}hm correspondence and {H}ochster
  duality}, Bull. Lond. Math. Soc. \textbf{40} (2008), no.~2, 263--273.

\bibitem{SchTre2010}
N.~Schwartz and M.~Tressl, \emph{{Elementary properties of minimal and maximal
  points in Zariski spectra}}, J. Algebra \textbf{323} (2010), no.~3, 698--728.

\bibitem{Schwartz2013}
Niels Schwartz, \emph{Sheaves of abelian l-groups}, Order \textbf{30} (2013),
  no.~2, 497--526.

\bibitem{Sto1937}
M.~H. Stone, \emph{{Topological representations of distributive lattices and
  Brouwerian logics}}, \v{C}asopis pro Pe\v{s}tov\'an\'i Matematiky a Fysiky
  \textbf{67} (1937), 1--25.

\bibitem{Vag1992}
D.~J. Vaggione, \emph{{Sheaf representation and Chinese Remainder Theorems}},
  {Algebra Universalis} \textbf{29} (1992), 232--272.

\bibitem{Wol1974}
A.~Wolf, \emph{{Sheaf representations of arithmetical algebras}}, {Recent
  Advances in the Representation Theory of Rings and $C^*$-algebras by
  Continuous Sections} (K.~H. Hofmann and J.~R. Liukkonen, eds.), no. 148, Mem.
  Amer. Math. Soc., 1974, pp.~87--93.

\end{thebibliography}

\end{document}